\newtheorem{thm}{Theorem}[section]
\newtheorem{lem}[thm]{Lemma}
\newtheorem{prop}[thm]{Proposition}
\newtheorem{cor}[thm]{Corollary}
\theoremstyle{definition}
\newtheorem{ex}[thm]{Example}
\newtheorem{rmrk}[thm]{Remark}
\newtheorem{defn}[thm]{Definition}
\newcommand{\ilim}{\mathop{\varinjlim}\limits}
\newcommand{\sha}{{\cyr Sh}}
\newcommand{\triv}{\boldsymbol{1}}
\newcommand{\bZ}{\mathbb{Z}}
\newcommand{\bQ}{\mathbb{Q}}
\newcommand{\bF}{\mathbb{F}}
\newcommand{\bC}{\mathbb{C}}
\newcommand{\fm}{\mathfrak m}
\newcommand{\fn}{\mathfrak n}
\newcommand{\fp}{\mathfrak p}
\newcommand{\ff}{\mathfrak f}
\newcommand{\fq}{\mathfrak q}
\newcommand{\frakS}{\mathfrak S}
\newcommand{\modm}{(\fm_0,\;\fm_\infty)}
\newcommand{\C}{{\mathcal C}}
\newcommand{\pSelmer}{S_{\!\!p}}
\newcommand{\pInftySelmer}{S_{\!\!p^\infty}}
\newcommand{\ord}{{\rm ord}}
\newcommand{\Ind}{{\rm Ind}}
\newcommand{\Res}{{\rm Res}}
\newcommand{\Gal}{{\rm Gal}}
\long\def\symbolfootnote[#1]#2{\begingroup
\def\thefootnote{\fnsymbol{footnote}}\footnote[#1]{#2}\endgroup}
\font\tencyr=wncyr10
\font\sevencyr=wncyr7
\font\fivecyr=wncyr5
\def\cyr{\fam\cyrfam\tencyr\cyracc}
\font\teneurm=eurm10
\font\seveneurm=eurm7
\font\fiveeurm=eurm5
\def\eurm#1{{\fam\eurmfam\relax#1}}
\font\teneusm=eusm10
\font\seveneusm=eusm7
\font\fiveeusm=eusm5
\def\eusm#1{{\fam\eusmfam\relax#1}}
\begin{document}
\title{Large Selmer groups over number fields}
\author{Alex Bartel$^\dagger$}

\maketitle

\begin{abstract}
Let $p$ be a prime number and $M$ a quadratic number field, $M\neq\bQ(\sqrt{p})$ if $p\equiv
1 {\rm\;mod\;}4$. We will prove that for any positive integer $d$ there exists a Galois extension
$F/\bQ$ with Galois group $D_{2p}$ and an elliptic curve $E/\bQ$ such that $F$ contains $M$ and the
$p$-Selmer group of $E/F$ has size at least $p^d$.
\end{abstract}
\symbolfootnote[0]{$^\dagger$ The author was supported by an EPSRC grant}
\symbolfootnote[0]{MSC 2000: Primary 11G05, Secondary 11G40, 20C10}
\tableofcontents

\section{Introduction}
Let $E$ be an elliptic curve defined over a number field $K$ and let $F$ be a number field containing $K$. We will write $r(E/F)$ for the rank of the abelian group of $F$-rational points of $E$, $S_{\!\!p}(E/F)$ for the $p$-Selmer group of $E$ over $F$, $\pInftySelmer(E/F)$ for the $p$-infinity Selmer group of $E$ over $F$ and $\sha(E/F)$ for the Tate-Shafarevich group of $E$ over $F$. It is conjectured that $\sha(E/F)$ is always finite and in this case the $p$-infinity Selmer group contains a direct product of $r(E/F)$ copies of $\mathbb{Q}_p/\bZ_p$. On the other hand, it is widely believed that both $r(E/F)$ and $\sha(E/F)[p^\infty]$ can be arbitrarily large for any $p$ when $F=K=\mathbb{Q}$.

Currently it is not even known whether the rank $r(E/F)$ can be arbitrarily large as $K$ and $F$ vary over number fields of bounded degree over $\mathbb{Q}$. The corresponding statement for $\sha(E/F)[p^\infty]$ was proved in \cite{Klo-1}. The bound given there for the degree of $F/\mathbb{Q} = K/\mathbb{Q}$ is ${\eurm O}(p^4)$. For elliptic curves with complex multiplication this is improved in \cite{Cl-1} to $K=\bQ$ and $\left[F:\bQ\right] < p^3$. Moreover, this latter construction does not need to vary the elliptic curve. For $p=2,3,5$ it is shown in \cite{Kra-1}, in \cite{Cas-1} and in \cite{Fis-1}, respectively, that $\sha(E/\mathbb{Q})[p^\infty]$ can be arbitrarily large and for $p=7,13$ the same is shown in \cite{Mat-1}.

In general, very little is known about the Tate-Shafarevich group. A more tractable question is whether the $p$-Selmer group $\pSelmer(E/F)$ can be arbitrarily large as $K$ and $F$ vary over number fields of bounded degree and $E$ is defined over $K$. Since $\pInftySelmer(E/F)$ contains $ \ilim E(F)/p^nE(F)$ (where the direct limit is taken with respect to multiplication-by-$p$ maps) with quotient isomorphic to $\sha(E/F)[p^\infty]$, an affirmative answer to this question implies that either $r(E/F)$ or $\sha(E/F)[p]$ must get arbitrarily large. Tom Fisher showed in \cite{Fis-1} that $S_{\!7}(E/\mathbb{Q})$ can get arbitrarily large. Unlike for $p=5$, the result does not specify whether the large Selmer group comes from a large rank or from a large Tate-Shafarevich group. In \cite{KS-1} it is shown that the $p$-Selmer group $\pSelmer(E/K)$ can be arbitrarily large when $F=K$ varies over number fields of degree at most $g+1$ where $g$ is the genus of the modular curve $X_0(p)$ and in particular that $S_{\!13}(E/\mathbb{Q})$ can be arbitrarily large. The main result of the present paper is the following:

\begin{thm}\label{thm:Res}
Let $p$ be a prime number and $M$ a quadratic number field, $M\neq\bQ(\sqrt{p})$ if $p\equiv
1 {\rm\;mod\;}4$. Given any positive integer $d$ there exists a Galois extension
$F/\bQ$ with Galois group $D_{2p}$ and an elliptic curve $E/\bQ$ such that $F$ contains $M$ and $\#\pSelmer(E/F)\geq p^d$.
\end{thm}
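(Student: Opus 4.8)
The plan is to exploit the representation theory of $D_{2p}$ together with the behaviour of Selmer groups in towers, following the philosophy that a large Selmer group can be manufactured from a "hidden" isomorphism of Galois-module structures. Write $G=\Gal(F/\bQ)\cong D_{2p}$, let $C\cong\bZ/p\bZ$ be its cyclic subgroup of order $p$, and let $\sigma$ be a reflection, so that $F^C$ is the quadratic field $M$ and $F^{\langle\sigma\rangle}$ is one of the degree-$p$ subfields. The irreducible $\bQ_p$-representations of $G$ are the trivial representation $\triv$, the sign character $\epsilon$ (with kernel $C$), and a $(p-1)$-dimensional representation $\rho$; over $\bQ$ these are the same, and over $\overline{\bQ}_p$ the representation $\rho$ splits into the $\tfrac{p-1}{2}$ two-dimensional faithful representations (when $p$ is odd). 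The key group-theoretic input is a relation among permutation modules: the regular representation decomposes as $\bQ[G]=\triv\oplus\epsilon\oplus2\rho$, whereas $\bQ[G/\langle\sigma\rangle]=\triv\oplus\rho$ and $\bQ[G/C]=\triv\oplus\epsilon$. Combining the two conjugacy classes of reflections one obtains the Brauer relation $\bQ[G/\langle\sigma\rangle]\oplus\bQ[G/\langle\sigma'\rangle]\oplus\bQ[G/G]\;\cong\;\bQ[G/C]\oplus\bQ[G]$ (up to a copy of $\triv$ on each side), which for an elliptic curve $E/\bQ$ translates into an isogeny relation among the Weil restrictions, hence a relation between Selmer groups of $E$ over the various subfields of $F$.

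The strategy is then as follows. First I would reduce the problem to producing, for the fixed quadratic field $M$, a degree-$p$ cyclic extension $L/M$, Galois over $\bQ$ with group $D_{2p}$, over which some elliptic curve $E/\bQ$ acquires many new points or a large $\sha[p]$ coming from the $\rho$-isotypic part; concretely, one wants the $\rho$-component of the Selmer group of $E/F$ to have $\bQ_p$-dimension at least $2d$. Second, I would choose $E/\bQ$ so that $E[p]$, as a $\Gal(\overline{\bQ}/\bQ)$-module, contains (or surjects onto) a line on which Galois acts through a character of the appropriate order, arranging that the mod-$p$ representation is reducible in a way that makes the relevant local conditions easy to control; a convenient device is to take $E$ with a rational $p$-isogeny, so that $E[p]$ sits in an exact sequence $0\to\mu\to E[p]\to\bZ/p\bZ\to 0$ of characters (after the familiar twist), reducing the computation of $S_p(E/F)$ to a computation with class groups and units of $F$ and its subfields. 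Third, I would invoke genus theory / Chebotarev to find infinitely many such $D_{2p}$-extensions $F\supset M$ in which the $p$-rank of the relevant (ray) class group of $L=F^{?}$ — equivalently the $\rho$-multiplicity in the class group of $F$ — is as large as we please: ramifying $F/M$ at many primes that split completely in $M$ and are $\equiv 1\bmod p$ forces the $p$-part of the class group to grow, and the $D_{2p}$-equivariance ensures this growth lands in the $\rho$-isotypic piece. Finally, a Cassels–Poitou–Tate style comparison of the global Selmer group with the product of local conditions — using that the local conditions at the ramified primes contribute the same on both sides of the isogeny — converts the large class-group piece into a large $p$-Selmer group $S_p(E/F)$.

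The main obstacle, and the reason for the hypothesis $M\neq\bQ(\sqrt{p})$ when $p\equiv 1\bmod 4$, is controlling the $p$-Selmer group precisely rather than just up to bounded error: the Brauer relation among permutation modules is only an isomorphism of $\bQ[G]$-modules, not of $\bZ_p[G]$-modules, so passing from it to an honest inequality $\#S_p(E/F)\geq p^d$ requires bounding the "defect" — the discrepancy between the Selmer group and the naive prediction of the isogeny relation — independently of the number of ramified primes. This defect is governed by the local Tamagawa factors, by the behaviour at $p$ itself (where the local condition is the subtle Bloch–Kato/finite condition), and by whether $E$ has good or multiplicative reduction at $p$; the excluded case $M=\bQ(\sqrt{p})$ is precisely where the arithmetic at $p$ (ramification of $F/\bQ$ above $p$ interacting with the period of $E$) would otherwise swamp the contribution we are trying to isolate. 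I expect the bulk of the work to be in (a) making the local computation at $p$ uniform, via a careful choice of $E$ with prescribed reduction type and prescribed image of $E[p]$ under inertia at $p$, and (b) upgrading the $\bQ_p$-dimension count of the $\rho$-isotypic component to a genuine lower bound on the order of an honest finite $p$-group, for which I would work throughout with the $p$-Selmer group attached to the isogeny (the "$\phi$-Selmer group") and descend.
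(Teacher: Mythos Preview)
Your central device---choosing $E/\bQ$ with a rational $p$-isogeny so that $E[p]$ is reducible and the Selmer computation reduces to class groups and units---cannot work for general $p$: by Mazur's theorem there are no elliptic curves over $\bQ$ with a rational $p$-isogeny once $p\notin\{2,3,5,7,11,13,17,19,37,43,67,163\}$. So the plan as written is a proof for finitely many primes, not for all $p$. (A smaller point: in $D_{2p}$ with $p$ odd all reflections are conjugate, so your Brauer relation is misstated; the correct one is $2\,\bQ[G/C_2]\oplus\bQ[G/C_p]\cong 2\,\bQ[G/G]\oplus\bQ[G]$, equivalently $\Theta=1-2C_2-C_p+2G$.)

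The paper avoids this obstruction by a completely different mechanism. It does not touch the Galois structure of $E[p]$ at all; in fact for $p>7$ it takes $E$ semistable, so that the mod-$p$ representation is surjective and there is no $p$-torsion over $F$. Instead it feeds the Brauer relation $\Theta$ into the isogeny-invariance formula for BSD-quotients (equation~(\ref{eq:mainEqn}) and its unconditional refinement in Section~\ref{sec:uncond}) and forces the \emph{Tamagawa quotient} $C(E/\Theta)$ to be a large negative power of $p$. Concretely, if $q$ is a prime of split multiplicative reduction for $E$ that is inert in $M$ and ramified in $F/M$, then $c_q(E/\Theta)=1/p$; one builds $F\supset M$ by class field theory so that many such $q$ ramify, and takes $E$ in Legendre form with $\lambda-1$ divisible by those $q$. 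The $p$-part of the isogeny formula then forces either the rank, or the number of $p$-primary cyclic summands of $\sha$, or the $\bZ_p$-corank of $\sha_{\rm div}$, to grow, and any of these makes $\pSelmer(E/F)$ large. Your diagnosis of the hypothesis $M\neq\bQ(\sqrt{p})$ is also off: it has nothing to do with local conditions at $p$. The construction needs primes $q$ inert in $M$ with $q\equiv -1\bmod p$ (so that $|\bF_{q^2}^\times|$ is divisible by $p$ and one can build a $\bZ/p$-extension of $M$ ramified at $q$), and when $M=\bQ(\sqrt{p})$ with $p\equiv 1\bmod 4$ quadratic reciprocity forces every $q\equiv\pm1\bmod p$ to split in $M$, so that set of primes is empty.
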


Our approach lends itself to concrete computations. We will write down the elliptic curves $E$ over $\mathbb{Q}$ in Legendre normal form and exhibit the number fields $F$ in terms of class field theory. Defining polynomials can then be computed using algorithms described in \cite[Chapter 6]{Coh-1}.
\begin{rmrk}
In a recent preprint \cite{Mat-2}, Matsuno shows that given any cyclic extension $F/\bQ$ of prime degree $p$, the $p$-torsion of the Tate-Shafarevich group of $E/F$ is unbounded as $E$ varies over elliptic curves defined over $\bQ$.
Also, in a recent preprint by Clark and Sharif \cite{CF-1} it is shown that for any $E/\bQ$ the Tate-Shafarevich group can get arbitrarily large over extensions $F/\bQ$ of degree $p$, not necessarily Galois.
\end{rmrk}
\begin{rmrk}
Unfortunately, it seems impossible to push our construction further to lower the degree of the extensions required. The technique fundamentally relies on non-abelian extensions of degree divisible by $p$.
\end{rmrk}
We now briefly describe the method of our proof. The starting point is the conjecture of Birch, Swinnerton-Dyer and Tate \cite{Tat-1} relating the order of vanishing of the L-function $L(E/F,s)$ over $F$, associated to the elliptic curve $E$, at $s=1$ to several arithmetic invariants of $E/F$. We will call the corresponding quotient the BSD-quotient for $E/F$.

If for some fields $L_i$ and $L_j'$ we have an identity of L-functions
\[\prod_i L(E/L_i,s) = \prod_j L(E/L_j',s)\]
then the conjecture predicts an equality of the corresponding products of BSD-quotients. In fact, this equality follows just from the finiteness of the Tate-Shafarevich groups, as explained in the footnote on page 7 in \cite{TVD-2}. We use such an equality to relate certain quotients of orders of Tate-Shafarevich groups and of regulators to a corresponding quotient of Tamagawa numbers. In particular, we can show that if this quotient of Tamagawa numbers gets arbitrarily large then the quotient
\[\prod_i\#\sha(E/L_i)\times \text{Reg}(E/L_i) \Big/ \prod_j\#\sha(E/L_j')\times \text{Reg}(E/L_j')\]
must get arbitrarily large, where $\text{Reg}(E/F)$ is the absolute value of the determinant of the canonical height pairing on a basis of $E(F)/E(F)_{\text{tors}}$.

We then examine so called regulator constants, first introduced in \cite{TVD-1} and further exploited in \cite{TVD-2}, to show that an arbitrarily large regulator quotient implies arbitrarily large rank.

To produce arbitrarily large Tamagawa quotients is an application of class field theory and is done in section 3.

In section 4 we prove Theorem \ref{thm:Res} unconditional on the finiteness of the Tate-Shafarevich groups. In this case the BSD quotient must be modified to account for a possible divisible component in the Tate-Shafarevich groups. We present the conditional proof first because it appears more natural in the context of the conjecture of Birch, Swinnerton-Dyer and Tate and motivates the unconditional one.\vspace{\baselineskip}
\newline
\textbf{Acknowledgements.} During this research the author was supported by an EPSRC grant at Cambridge University. We would like to thank the Department of Pure Mathematics and Mathematical Statistics for a wonderful working environment. We would also like to thank Antonio Lei and Vladimir Dokchitser for many useful discussions. This work would not have been possible without the invaluable patient and constructive guidance from Tim Dokchitser and without Tim's and Vladimir's enormous time commitment! Finally, thanks are due to an anonymous referee for an extremely careful reading of the manuscript and lots of constructive criticism!\vspace{\baselineskip}
\newline
\textbf{Notation.} Throughout the paper $K$ will be a number field, $\bar{K}$ will denote an algebraic closure. If $v$ is a place of $K$ then $|.|_v$ will denote the normalised absolute value at $v$. The absolute Galois group $\text{Gal}(\bar{K}/K)$ of $K$ will be denoted $G_K$. Given an elliptic curve $E/K$ we use the following notation:

\begin{tabular}{ l l }
  $\text{r}(E/K)$ & the Mordell-Weil rank of $E/K$; \\
	$c_v(E/K)$ & the local Tamagawa number at a place $v$ of $K$; \\
	$c_v(E/F)$ & the product of the local Tamagawa numbers at all places of $F$ above\\
	 & $v$ where $F/K$ is an extension of number fields and $v$ is a place of $K$; \\
	$c(E/K)$ & the product of the local Tamagawa numbers at all finite places of $K$; \\
	$W_{F/K}(E)$ & the Weil restriction of scalars of $E$ from $F$ to $K$; \\
	$\pSelmer(E/F)$ & the $p$-Selmer group of $E/F$, defined as\\
	 & $\text{ker}\left(H^1(G_F,E[p])\rightarrow \prod_v H^1(G_v,E)\right)$, where $G_v = \text{Gal}(\bar{F_v}/F_v)$, the\\
	 & map is the restriction and the product is taken over all places of $F$.
\end{tabular}

Fix an invariant differential $\omega$ on $E$. At each finite place $v$ of $K$ take a N\'{e}ron differential $\omega_v^0$. Then we set $C_v(E/K) = c_v(E/K)\left|\frac{\omega}{\omega_v^0}\right|_v$, $C_v(E/F) = 
\prod_{w|v} C_w(E/F)$ with the product taken over places of $F$, and
\[C(E/K) = \prod_{v\nmid\infty}C_v(E/K).\]
Here we followed \cite{Tat-1} in writing $\frac{\omega}{\omega_v^0}$ for the unique $v$-adic number $\delta$ such that $\omega=\delta\omega_v^0$, which exists because the space of holomorphic differentials on an elliptic curve is one-dimensional.

The definition of $C(E/K)$ depends on the choice of the invariant differential $\omega$ but this dependence will not cause any ambiguity as long as we always choose the same differential when we have the same expression for number fields $L_i/K$.

\section{Regulator constants}
\subsection{BSD quotients}
We recall a compatibility statement between the Birch and Swinnerton-Dyer conjecture and Artin formalism for $L$-functions and refer to \cite{TVD-1} for details. If $G$ is a group, $H$ a subgroup and $\rho$ a representation of $H$, we write $\text{Ind}_{G/H}(\rho)$ for the induced representation from $H$ to $G$. Similarly, given a representation $\tau$ of $G$, we write $\text{Res}_{G/H}(\tau)$ for the restriction of $\tau$ to $H$. If $K$ is a number field and $\bar{K}$ is a fixed algebraic closure and if $\bar{K}\supseteq L\supseteq K$ is an algebraic extension of number fields then given an Artin representation $\rho$ of $\text{Gal}(\bar{K}/L)$ we write $\text{Ind}_{L/K}(\rho)$ for the induced representation of $\text{Gal}(\bar{K}/K)$.

From now on, whenever we start with a number field, we fix an algebraic closure and we assume all extensions to be contained in this algebraic closure. Given a number field $K$, suppose that we have algebraic extensions $L_i\supseteq K$ and $L_j'\supseteq K$ such that
\[\bigoplus_i \text{Ind}_{L_i/K} \boldsymbol{1}_{L_i} \cong \bigoplus_j \text{Ind}_{L_j'/K} \boldsymbol{1}_{L_j'}.\]
Then by Artin formalism for $L$-functions we have
\[\prod_i L(E/L_i,s) = \prod_j L(E/L_j',s).\]
Of great importance to us is the following compatibility statement between Artin formalism and the Birch and Swinnerton-Dyer conjecture:
\begin{thm}[\cite{TVD-1}, Theorem 2.3]\label{thm:compThm}
If $L_i$ and $L_j'$ are as above then
\begin{enumerate}
\item $\sum_i r(E/L_i) = \sum_j r(E/L_j')$;
\item assuming that $\sha(E/L_i)$ and $\sha(E/L_j')$ are finite we have
\[\prod_i \frac{\#\sha(E/L_i){\rm Reg}(E/L_i)C(E/L_i)}{|E(L_i)_{\rm tors}|^2} = \prod_j \frac{\#\sha(E/L_j'){\rm Reg}(E/L_j')C(E/L_j')}{|E(L_j')_{\rm tors}|^2}.\]
\end{enumerate}
Moreover, if one only assumes that the $p$-primary parts of the Tate-Shafarevich groups are finite then the same equality holds up to a $p$-adic unit if $\sha$ is replaced by its $p$-primary part.
\end{thm}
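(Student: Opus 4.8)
The plan is to prove~(1) by Frobenius reciprocity, and~(2) by descending to abelian varieties over the single field~$K$ via Weil restriction of scalars and then invoking the isogeny invariance of the conjectural Birch--Swinnerton-Dyer quotient.

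For~(1), first enlarge the setup to a finite Galois extension $N/K$ containing all the $L_i$ and $L_j'$; this is legitimate since each $\Ind_{L_i/K}\triv$ is finite-dimensional, so $[L_i:K]<\infty$, and factors through a finite quotient of~$G_K$. Put $G=\Gal(N/K)$, $H_i=\Gal(N/L_i)$ and $H_j'=\Gal(N/L_j')$. Then $V:=E(N)\otimes_{\bZ}\bQ$ is a finite-dimensional $\bQ[G]$-module, $E(L_i)\otimes_{\bZ}\bQ=V^{H_i}$ by Galois descent, and so $r(E/L_i)=\dim_{\bQ}V^{H_i}=\langle V,\Ind_{G/H_i}\triv\rangle_G$ by Frobenius reciprocity. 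Summing over~$i$ and using $\bigoplus_i\Ind_{L_i/K}\triv\cong\bigoplus_j\Ind_{L_j'/K}\triv$ gives $\sum_i r(E/L_i)=\sum_j r(E/L_j')$.

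For~(2), set $A_i=W_{L_i/K}(E)$, an abelian variety over~$K$ of dimension $[L_i:K]$. The usual compatibilities of Weil restriction give $L(A_i/K,s)=L(E/L_i,s)$, $r(A_i/K)=r(E/L_i)$, $\sha(A_i/K)\cong\sha(E/L_i)$, $A_i(K)\cong E(L_i)$ compatibly with the canonical height pairings, and $c_w(A_i/K)=\prod_{v\mid w}c_v(E/L_i)$; transporting $\omega$ to a global exterior differential on~$A_i$, the factors $|\omega/\omega_v^0|_v$ and the archimedean periods for $E/L_i$ differ from those for $A_i/K$ only by a fixed rational power of the absolute discriminant $d_{L_i}$, in such a way that the full BSD formula for $A_i/K$ is term-by-term equivalent to the one for $E/L_i$. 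Next, the contravariant additive functor $M\mapsto{\rm Hom}_{\bZ[G]}(M,W_{N/K}(E))$ sends $\bZ[G/H_i]$ to $A_i$. Since $\bigoplus_i\bZ[G/H_i]$ and $\bigoplus_j\bZ[G/H_j']$ become isomorphic after $\otimes\bQ$, clearing denominators produces $\bZ[G]$-module homomorphisms in both directions whose composites are multiplication by a nonzero integer~$n$; applying the functor yields morphisms of abelian varieties between $\bigoplus_i A_i$ and $\bigoplus_j A_j'$ in both directions with composites multiplication by~$n$, hence isogenies, so $\bigoplus_i A_i$ is $K$-isogenous to $\bigoplus_j A_j'$. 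Finally, the full conjectural BSD quotient of an abelian variety over~$K$ --- the product $\#\sha\cdot\Reg\cdot C$ together with the archimedean periods, divided by $|{\rm tors}|^2$ and the standard discriminant factor --- is multiplicative over products and, given that the relevant Tate--Shafarevich groups are finite, invariant under $K$-isogeny (Cassels for elliptic curves, and its extension to abelian varieties over number fields); hence the product over~$i$ of these quotients for the $A_i$ equals the product over~$j$ for the $A_j'$.

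It remains to strip off the archimedean periods and discriminant factors. By the conductor--discriminant formula, applied via $\Ind_{L_i/\bQ}\triv=\Ind_{K/\bQ}(\Ind_{L_i/K}\triv)$, one has $|d_{L_i}|=|d_K|^{[L_i:K]}\,N_{K/\bQ}\!\bigl(\mathfrak{f}(\Ind_{L_i/K}\triv)\bigr)$, so equality of the virtual representations forces $\prod_i|d_{L_i}|=\prod_j|d_{L_j'}|$ (and $\sum_i[L_i:K]=\sum_j[L_j':K]$, as noted). For the periods: since $E$ is defined over~$K$, each archimedean period $\Omega_v(E/L_i)$ depends only on the place of~$K$ below~$v$ and on whether $v$ is real or complex, and the multiplicities with which these types occur are read off from the restrictions of $\Ind_{L_i/K}\triv$ to the decomposition groups at the archimedean places of~$K$, which agree by hypothesis; hence $\prod_i\prod_{v\mid\infty}\Omega_v(E/L_i)=\prod_j\prod_{v\mid\infty}\Omega_v(E/L_j')$. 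Cancelling these common factors gives~(2); the last assertion follows by running the same computation with only $p$-adic valuations, using the $p$-part of Cassels's identity and absorbing prime-to-$p$ contributions into $p$-adic units. I expect the main obstacle to be exactly this normalisation bookkeeping --- identifying the precise powers of $d_{L_i}$ in the $W_{L_i/K}(E)\leftrightarrow E/L_i$ dictionary, verifying that together with the conductor--discriminant and archimedean contributions they really do cancel, and applying isogeny invariance in a shape compatible with the fixed differential~$\omega$ so that the factors $|\omega/\omega_v^0|_v$ hidden in $C$ transform correctly by the product formula; the representation-theoretic and homological steps are formal.
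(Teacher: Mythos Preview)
The paper does not prove this theorem itself; it is quoted from \cite{TVD-1} without argument. Your approach --- Frobenius reciprocity for~(1), and for~(2) passing to Weil restrictions $A_i=W_{L_i/K}(E)$, producing an isogeny $\prod_i A_i\to\prod_j A_j'$ from a $\bZ[G]$-map between the permutation lattices, and invoking isogeny invariance of the BSD quotient --- is correct and is exactly the machinery the paper later sketches in Section~\ref{sec:uncond} (following \cite{Mil-1} and \cite{TVD-1}) when it rederives the analogous identity unconditionally. Your cancellation of the archimedean periods and discriminant factors via the conductor--discriminant formula and restriction to archimedean decomposition groups is also the standard route, and matches the paper's remark after Theorem~\ref{thm:compThm} that these terms cancel once one fixes a single differential~$\omega$ over~$K$.
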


Note that the real and the complex periods as well as the discriminants of the fields, which are present in the conjecture of Birch and Swinnerton-Dyer, cancel in our situation, provided that one chooses the same invariant differential $\omega$ over $K$ for each term.

Now let $G$ be any finite group and write ${\eusm H}$ for the set of conjugacy classes of subgroups of $G$. If for some $H_i\in {\eusm H}$ and $H'_j\in {\eusm H}$ we have an isomorphism of permutation representations of $G$ $\oplus_i \mathbb{C}[G/H_i] \cong \oplus_j \mathbb{C}[G/H'_j]$ then we follow \cite{TVD-1} in saying that
\[\Theta = \sum_i H_i - \sum_j H'_j \in \bZ{\eusm H}\]
is a relation between permutation representations, or just a $G$-relation.

We can reformulate this definition as follows: the Burnside ring of $G$ is defined as the ring of formal $\bZ$-linear combinations of isomorphism classes $[S]$ of finite $G$-sets modulo the relations
\[[S] + [T] = [S\sqcup T],\;\;\;\;[S][T] = [S\times T],\]
where $S\sqcup T$ denotes disjoint union and $S\times T$ denotes the Cartesian product; the representation ring of $G$ over $\bC$ is the ring of formal $\bZ$-linear combinations of isomorphism classes $[M]$ of finite dimensional $\bC G$-modules modulo the relations
\[[M] + [N] = [M\oplus N],\;\;\;\;[M][N] = [M\otimes N].\] We have a natural map from the Burnside ring to the representation ring that sends a $G$-set $S$ to the $\bC G$-module $\bC\left[S\right]$ with $\bC$-basis indexed by the elements of $S$ and the natural $G$-action. For more material on the Burnside ring and the representation ring, see for example \cite[\S 80-81]{CR}. A $G$-relation is then an element of the kernel of this natural map.

\textbf{Notation.} Let $F/K$ be a Galois extension of number fields with Galois group $G$. Given a $G$-relation $\Theta$ as above, set $L_i = F^{H_i}$ and $L_j' = F^{H'_j}$. Write $\text{Reg}(E/\Theta)$ for the corresponding quotient $\prod_i \text{Reg}(E/L_i)\big/\prod_j \text{Reg}(E/L_j')$ and similarly for $\#\sha(E/\Theta)$, $C(E/\Theta)$ and $|E(\Theta)_{\text{tors}}|$ or indeed for any function to $\bC$ associated with $E$ which depends on the field extension.

In this shorthand language the second part of Theorem \ref{thm:compThm} says that
\begin{eqnarray}\label{eq:mainEqn}\#\sha(E/\Theta)\text{Reg}(E/\Theta) = \frac{|E(\Theta)_{\text{tors}}|^2}{C(E/\Theta)}.
\end{eqnarray}
Our strategy will be to control the Tamagawa numbers and the torsion subgroups to make the left hand side of this equation large.
\begin{ex}\label{ex:impEx}
Let $G=\left<a,b: a^p = b^2 = (ab)^2=1\right>$ be the dihedral group of order $2p$ for an odd prime $p$. Then we have a relation between permutation representations
\[\Theta = 1 - 2C_2 - C_p + 2G\]
which is unique up to scalar multiples. Suppose now that $E/K$ is an elliptic curve. Take the subgroups $H=\left<a\right>=C_p$, $H'=\left<b\right>=C_2$. Here and in the rest of the paper we will often identify subgroups with their image in ${\eusm H}$. Let $F/K$ be a Galois extension of number fields with Galois group $G$ and let $L=F^{H'}$, $M=F^H$ be intermediate extensions. Let $v$ be a finite place of $K$. If $E$ has split multiplicative reduction at $v$ then for any extension $K'/K$ and any place $w$ of $K'$ above $v$ we have $c_w(E/K') = -w(j(E))$ where $j(E)$ is the $j$-invariant of the elliptic curve (see e.g. \cite[Ch. IV Cor. 9.2]{Sil-2}). Thus, if $v$ is a place of split multiplicative reduction of $E$ with only one prime of $F$ above $v$ and this prime has ramification index $p$ then
\[c_v(E/\Theta) = \frac{c_v(E/K)^2c_v(E/F)}{c_v(E/M)c_v(E/L)^2} = \frac{pc_v(E/K)^3}{p^2c_v(E/K)^3} = \frac{1}{p}.\]
Similarly, it is easily seen that if a place $v$ of split multiplicative reduction is totally ramified in $F/K$ then the associated Tamagawa quotient is $1/p$ and for any other ramification behaviour it is 1.
\end{ex}
\begin{rmrk}
Given any relation $\Theta$, if $E$ is semi-stable then $C(E/\Theta) = c(E/\Theta)$. Indeed, we first claim that in a relation the Tamagawa quotient
\[\prod_i C_v(E/L_i)\Big/\prod_j C_v(E/L_j')\]
above each finite place $v$ of $K$ does not depend on the choice of the invariant differential $\omega$. Namely,
if we re-scale $\omega$ by a constant $\alpha\in K^\times$ then the Tamagawa quotient changes by a factor of
\begin{eqnarray*}
\prod_i \prod_{\substack{w\text{ places }\\ \text{of }L_i,\;w|v}} |\alpha|_w\Big/
\prod_j \prod_{\substack{w\text{ places }\\ \text{of }L_j',\;w|v}} |\alpha|_w & = &
\prod_i |\alpha|_v^{\#\{\text{places of }L_i\text{ above v}\}}
\Big/ \prod_j |\alpha|_v^{\#\{\text{places of }L_j'\text{ above v}\}}\\
& = & |\alpha|_v^{\sum_i \#D\backslash G/H_i - \sum_j \#D\backslash G/H_j'},
\end{eqnarray*}
where $D$ is a decomposition group of $v$.
But the exponent of $|\alpha|_v$ is zero by Mackey's formula and Frobenius reciprocity, since $\#D\backslash G/H$
is the number of trivial representations in the direct sum decomposition of $\Res_{G/D}(\Ind_{G/H}(\triv_H))$, $\triv_H$ denoting
the trivial representation of $H$, and since we have
an isomorphism of permutation representations $\oplus_i \Ind_{G/H_i}(\triv_{H_i})\cong \oplus_j
\Ind_{G/H_j'}(\triv_{H_j'})$.

Next, we note that when $v$ is a place of semi-stable reduction of $E$ we can choose $\omega$ to be a N\'eron differential at $v$. Then $\omega$ stays minimal at all places above $v$ and so in a relation we can replace $C_v$ by $c_v$ in this case. Thus, for semi-stable elliptic curves $E$ we can replace the product $C(E/K)$ of the modified Tamagawa numbers by just the product of the local Tamagawa numbers $c(E/K)$, as claimed.
\end{rmrk}
\subsection{Regulator constants and Mordell-Weil ranks}
Let $G$ be a finite group, let $\Gamma$ be a finitely generated $\bZ G$-module which is $\bZ$-free. We call such a module a $G$-lattice over $\bZ$. Let $\left<,\right>$ be a fixed $\bC$-valued $G$-invariant non-degenerate $\bZ$-bilinear pairing on $\Gamma$. For each subgroup $H\leq G$,
\[\Gamma^H = \left\{\gamma\in\Gamma:\;\gamma^h = \gamma\;\;\forall h\in H\right\}\]
is also $\bZ$-free.
\begin{defn}
Given a $G$-relation $\Theta = \sum_i H_i - \sum_j H'_j$ define the regulator constant
\[{\C}_\Theta(\Gamma) = \frac{\prod_i \text{det}\left(\frac{1}{|H_i|}\left<,\right>|\Gamma^{H_i}\right)}{\prod_j \text{det}\left(\frac{1}{|H'_j|}\left<,\right>|\Gamma^{H'_j}\right)} \in \bC^\times,\]
where each inner product matrix is evaluated with respect to some $\bZ$-basis on the fixed submodule. If the matrix of the pairing on $\Gamma^{H}$ with respect to some fixed basis is $M$ then changing the basis by the change of basis matrix $X\in \text{GL}(\Gamma^{H})$ changes the matrix of the pairing to $X^{\rm tr}MX$. So the regulator constant is indeed a well-defined element of $\bC^\times/(\bZ^\times)^2=\bC^\times$.
\end{defn}

We recall the relevant results from \cite{TVD-2}:
\begin{thm}[\cite{TVD-2}, Theorem 2.17]
The value of ${\C}_\Theta(\Gamma)$ is independent of the choice of the pairing.
\end{thm}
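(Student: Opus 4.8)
The plan is to reduce the statement to a one-dimensional computation by exploiting the multiplicativity of the regulator constant in $\Gamma$ and the semisimplicity of $\bC G$. First I would observe that $\C_\Theta$ is multiplicative on short exact sequences of $G$-lattices that split after tensoring with $\bC$: if $0\to\Gamma_1\to\Gamma\to\Gamma_2\to 0$ and the pairing on $\Gamma$ restricts to pairings on the pieces, then $\det$ is multiplicative on the block decomposition $\Gamma^H\otimes\bC=\Gamma_1^H\otimes\bC\oplus\Gamma_2^H\otimes\bC$ (taking invariants is exact over $\bC$), so $\C_\Theta(\Gamma)=\C_\Theta(\Gamma_1)\C_\Theta(\Gamma_2)$. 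Hence it suffices to treat the case where $\Gamma\otimes\bQ$ is $\bQ$-irreducible, and then — by a further reduction, extending scalars and noting the regulator constant only changes by a rational square, in fact is unchanged as an element of $\bC^\times/(\bQ^\times)^2$ — one may work with a $\bC$-irreducible representation $V$.

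Next I would show that on a fixed $\bC G$-module $V$ the quantity
\[\frac{\prod_i \det\!\left(\tfrac{1}{|H_i|}\langle,\rangle\big|V^{H_i}\right)}{\prod_j \det\!\left(\tfrac{1}{|H'_j|}\langle,\rangle\big|V^{H'_j}\right)}\]
does not depend on the choice of $G$-invariant non-degenerate pairing $\langle,\rangle$. The key point is that on an irreducible $V$ any two $G$-invariant pairings differ by a scalar (Schur's lemma, applied to the isomorphism $V\to V^*$ each pairing induces), so rescaling $\langle,\rangle$ by $\lambda\in\bC^\times$ multiplies $\det(\langle,\rangle|V^{H})$ by $\lambda^{\dim V^H}$ and hence multiplies the whole expression by $\lambda^{\sum_i \dim V^{H_i}-\sum_j\dim V^{H'_j}}$. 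This exponent is precisely the multiplicity of $\triv$ in $\bigoplus_i\Ind_{G/H_i}\triv_{H_i}\ominus\bigoplus_j\Ind_{G/H'_j}\triv_{H'_j}$ (by Frobenius reciprocity, $\dim V^H=\langle\Res_{G/H}V,\triv_H\rangle=\langle V,\Ind_{G/H}\triv_H\rangle$), and this vanishes because $\Theta$ is a $G$-relation, i.e.\ the two permutation modules are isomorphic. For reducible $V$ one must be slightly more careful: two invariant pairings need not differ by a scalar, but by Schur's lemma the "change of pairing" is given by an automorphism $\phi$ of $V$ commuting with $G$, and one can deform $\langle,\rangle$ to $\langle\phi\cdot,\cdot\rangle$ continuously (or isotypic-component by isotypic-component) and argue the ratio is locally constant in $\phi$, hence constant on the connected group of such automorphisms; alternatively one passes directly to the $\bC$-irreducible reduction above so that Schur gives a genuine scalar. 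I would present the irreducible reduction route as the clean one.

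Finally I would assemble the two halves: multiplicativity reduces to $\bC$-irreducible $V$, and on such $V$ the Schur-plus-relation argument shows invariance under rescaling the (essentially unique) pairing, while the fact that $V^H$'s dimension is what appears means no choice of basis matters either (that was already noted in the definition). The main obstacle I anticipate is the subtlety in the reduction step: a $G$-lattice over $\bZ$ need not be a direct sum of $G$-stable sublattices even if $\Gamma\otimes\bQ$ decomposes, so the filtration argument must be run over $\bQ$ (where it is fine since $\bQ G$ is semisimple) and one must check that each graded piece carries a non-degenerate invariant pairing — this is where non-degeneracy of the original pairing, together with the fact that a $\bQ G$-submodule and its orthogonal complement intersect trivially when the pairing is nondegenerate on the submodule, has to be invoked; if the pairing happens to be degenerate on some submodule one reorganizes the filtration, and since the final answer lives in $\bC^\times/(\bQ^\times)^2$ the choice of $\bZ$-lattices inside the $\bQ G$-modules is immaterial.
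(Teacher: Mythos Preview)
The paper does not supply its own proof of this theorem: it is simply quoted from \cite{TVD-2}, Theorem~2.17, with no argument reproduced. There is therefore nothing in this paper to compare your proposal against.

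On the proposal itself, the core mechanism you describe --- two $G$-invariant pairings differ by a $G$-equivariant automorphism, and the resulting change in the regulator constant is governed by the exponent $\sum_i\dim V^{H_i}-\sum_j\dim V^{H'_j}$, which vanishes by Frobenius reciprocity since $\Theta$ is a relation --- is indeed the heart of the matter and is essentially how the result is established in \cite{TVD-2}. One small slip: that exponent is the multiplicity of $V$, not of $\triv$, in the virtual permutation module $\bigoplus_i\Ind_{G/H_i}\triv_{H_i}-\bigoplus_j\Ind_{G/H'_j}\triv_{H'_j}$; your Frobenius reciprocity computation actually says this correctly, only the label is off.

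The part that needs more care is exactly the one you flag at the end: the reduction to $\bC$-irreducibles. Your opening paragraph assumes the pairing on $\Gamma$ restricts non-degenerately to the pieces of a filtration, which is not automatic, and invoking multiplicativity of $\C_\Theta$ in $\Gamma$ at this stage is delicate since that corollary is recorded \emph{after} the independence theorem (and its clean statement for arbitrary pairings uses the independence). The cleaner route, which you also sketch, is to stay with a single $\Gamma$ and compare two pairings directly via the $G$-endomorphism $\phi$ of $\Gamma\otimes\bQ$ they determine; one then shows that $\prod_i\det(\phi|_{\Gamma^{H_i}})\big/\prod_j\det(\phi|_{\Gamma^{H'_j}})=1$ by decomposing $\phi$ according to the isotypic pieces of $\Gamma\otimes\bC$ and applying the Frobenius reciprocity count to each. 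This avoids filtering $\Gamma$ altogether.
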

In particular, we can choose $\left<,\right>$ to be $\bQ$-valued and so ${\C}_\Theta(\Gamma)$ is in fact an element of $\bQ^\times$.
\begin{cor}[\cite{TVD-2}, Corollary 2.18]
${\C}_\Theta(\Gamma)$ is multiplicative in $\Theta$ and in $\Gamma$, i.e.
\begin{eqnarray*}{\C}_\Theta(\Gamma\oplus\Gamma') = {\C}_\Theta(\Gamma){\C}_\Theta(\Gamma'),\\
{\C}_{\Theta+\Theta'}(\Gamma) = {\C}_\Theta(\Gamma){\C}_{\Theta'}(\Gamma).
\end{eqnarray*}
\end{cor}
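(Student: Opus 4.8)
The plan is to read off both identities directly from the definition of $\C_\Theta(\Gamma)$, invoking the preceding theorem (independence of the choice of pairing) only for the statement about $\Gamma$. Fix a $G$-invariant non-degenerate pairing $\langle,\rangle$ on $\Gamma$ and abbreviate $\delta_H=\det\left(\frac{1}{|H|}\langle,\rangle\,\big|\,\Gamma^H\right)$, so that $\C_\Theta(\Gamma)=\prod_i\delta_{H_i}\big/\prod_j\delta_{H'_j}$. For multiplicativity in $\Theta$, write $\Theta=\sum_i H_i-\sum_j H'_j$ and $\Theta'=\sum_k K_k-\sum_l K'_l$, so that $\Theta+\Theta'=\sum_i H_i+\sum_k K_k-\sum_j H'_j-\sum_l K'_l$ in $\bZ\eusm H$. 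The identity $\C_{\Theta+\Theta'}(\Gamma)=\C_\Theta(\Gamma)\C_{\Theta'}(\Gamma)$ is then simply the fact that the product of the $\delta_H$ over the combined index sets splits according to the partition into the two original families; this is purely formal and uses the same pairing throughout. (Here one uses that $\Theta,\Theta',\Theta+\Theta'$ are all $G$-relations — true because the kernel of the map from the Burnside ring to the representation ring is an additive subgroup — and that each $\delta_H\neq0$: for $\gamma\in\Gamma^H$ the functional $\langle\gamma,-\rangle$ is $H$-invariant and hence factors through the averaging idempotent onto $\Gamma^H\otimes\bQ$, so the radical of $\langle,\rangle|\Gamma^H$ lies inside the radical of $\langle,\rangle$ on $\Gamma$, which is trivial.)

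For multiplicativity in $\Gamma$, I would choose $G$-invariant non-degenerate $\bZ$-bilinear pairings $\langle,\rangle_\Gamma$ on $\Gamma$ and $\langle,\rangle_{\Gamma'}$ on $\Gamma'$ and equip $\Gamma\oplus\Gamma'$ with the orthogonal direct-sum pairing $\langle,\rangle=\langle,\rangle_\Gamma\perp\langle,\rangle_{\Gamma'}$, which is again $G$-invariant and non-degenerate. Since taking $H$-invariants commutes with finite direct sums, $(\Gamma\oplus\Gamma')^H=\Gamma^H\oplus\Gamma'^H$ as $\bZ$-modules, and one may take a $\bZ$-basis of the left-hand side to be the union of $\bZ$-bases of the two summands. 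With respect to such a basis the Gram matrix of $\frac{1}{|H|}\langle,\rangle$ is block diagonal, with blocks the Gram matrices of $\frac{1}{|H|}\langle,\rangle_\Gamma|\Gamma^H$ and $\frac{1}{|H|}\langle,\rangle_{\Gamma'}|\Gamma'^H$, so its determinant is the product of the two. Substituting into the definition of $\C_\Theta$ and regrouping numerator and denominator yields $\C_\Theta(\Gamma\oplus\Gamma')=\C_\Theta(\Gamma)\C_\Theta(\Gamma')$ for the pairings just chosen, and then the theorem that $\C_\Theta$ is independent of the choice of pairing promotes this to the stated identity for arbitrary pairings.

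I do not expect a genuine obstacle: both parts amount to the bookkeeping that invariants commute with direct sums and that determinants of block-diagonal matrices multiply. The one point deserving care is that in the $\Gamma$-part the appeal to independence of the pairing cannot be dropped, because a general $G$-invariant non-degenerate pairing on $\Gamma\oplus\Gamma'$ need not be an orthogonal sum of pairings on the summands; the orthogonal sum is only a convenient representative, and it is \cite[Theorem 2.17]{TVD-2} that licenses computing $\C_\Theta(\Gamma\oplus\Gamma')$ with it.
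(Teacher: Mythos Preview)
Your argument is correct and is precisely the standard derivation that earns this statement the label ``Corollary'' of Theorem~2.17: multiplicativity in $\Theta$ is read off formally from the definition, and multiplicativity in $\Gamma$ follows by computing with the orthogonal direct-sum pairing and then invoking independence of the pairing. The paper itself does not give a proof but simply cites \cite[Corollary~2.18]{TVD-2}; your write-up supplies exactly what that citation is pointing to, and your aside on why each $\delta_H$ is nonzero (the restricted pairing stays non-degenerate via the averaging-idempotent argument) is a welcome point that is often left implicit.
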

The following result explains the fundamental r\^ole of regulator constants in our construction:
\begin{prop}\label{prop:largeRank}
Let $G$ be a finite group, $\Theta$ a $G$-relation, $K$ a number field. Suppose that there exists a sequence of elliptic curves $E_i/K$ and of Galois extensions $F_i/K$ with Galois group $G$ such that for some rational prime $p$, $|{\rm ord}_p({\rm Reg}(E_i/\Theta))|\rightarrow\infty$ as $i\rightarrow\infty$. Then $r(E_i/F_i)\rightarrow\infty$ as $i\rightarrow\infty$.
\end{prop}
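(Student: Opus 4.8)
I would prove the contrapositive. Suppose $r(E_i/F_i)$ does not tend to infinity; passing to a subsequence we may then assume $r(E_i/F_i)\le N$ for some fixed $N$, and it suffices to show that $\ord_p(\Reg(E_i/\Theta))$ stays bounded along it. The guiding idea is that $\Reg(E_i/\Theta)$ agrees, up to $p$-adically bounded factors, with the regulator constant $\C_\Theta(\Gamma_i)$ of the Mordell--Weil lattice $\Gamma_i = E_i(F_i)/E_i(F_i)_{\text{tors}}$, and that only finitely many such lattices can occur once the rank is bounded. Concretely, since $E_i$ is defined over $K$ and $\Gal(F_i/K)\cong G$, the finitely generated group $E_i(F_i)$ is a $\bZ G$-module, so $\Gamma_i$ is a $G$-lattice over $\bZ$ of rank $r(E_i/F_i)\le N$; the N\'eron--Tate height pairing $\langle\,,\,\rangle_i$ on $\Gamma_i$ is $G$-invariant (canonical heights are Galois-invariant) and positive definite, hence non-degenerate, so since $\C_\Theta(\Gamma)$ does not depend on the choice of pairing (\cite{TVD-2}), the rational number $\C_\Theta(\Gamma_i)$ may be computed directly from $\langle\,,\,\rangle_i$ using $\bZ$-bases of the fixed submodules.

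To compare $\Reg(E_i/\Theta)$ with $\C_\Theta(\Gamma_i)$, write $\Theta = \sum_a H_a - \sum_b H'_b$ and, for a subgroup $H\le G$, put $L = F_i^H$ and $r_H = \dim_\bQ(\Gamma_i\otimes\bQ)^H$. Since $\bZ\to\bQ$ is flat, $\Gamma_i^H\otimes\bQ = (\Gamma_i\otimes\bQ)^H = E_i(L)\otimes\bQ$, so $E_i(L)/\text{tors}$ sits inside $\Gamma_i^H$ as a subgroup of finite index $m_{i,H}$; as the Gram determinant of a finite-index subgroup is that of the ambient lattice times the square of the index, $\Reg(E_i/L) = m_{i,H}^2\det(\langle\,,\,\rangle_i\mid\Gamma_i^H)$. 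Forming the quotient over the terms of $\Theta$ and unwinding the factors $\tfrac1{|H|}$ in the definition of $\C_\Theta$ then gives
\[
\Reg(E_i/\Theta) = \Big(\tfrac{\prod_a m_{i,H_a}}{\prod_b m_{i,H'_b}}\Big)^{2}\cdot\Big(\tfrac{\prod_a |H_a|^{r_{H_a}}}{\prod_b |H'_b|^{r_{H'_b}}}\Big)\cdot\C_\Theta(\Gamma_i).
\]

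It remains to bound $\ord_p$ of each of the three factors along the subsequence. In the middle factor every exponent $r_H$ is at most $N$, so its $\ord_p$ is bounded in terms of $N$, $\Theta$ and $\ord_p|G|$. For the index factor, $n_i\Gamma_i^H\subseteq E_i(L)/\text{tors}$ with $n_i = |E_i(F_i)_{\text{tors}}|$, so $m_{i,H}$ divides $n_i^{N}$; and $n_i$ is bounded because $[F_i:\bQ] = |G|\,[K:\bQ]$ is fixed and the torsion of elliptic curves over number fields of bounded degree is uniformly bounded (Merel), so the whole index factor, and hence its $\ord_p$, is bounded. Finally $\Gamma_i$ is a $\bZ G$-lattice of rank $\le N$, and by the Jordan--Zassenhaus theorem there are only finitely many such up to isomorphism, so $\C_\Theta(\Gamma_i)$ assumes only finitely many values and $\ord_p\C_\Theta(\Gamma_i)$ is bounded. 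Together these estimates bound $\ord_p(\Reg(E_i/\Theta))$, contradicting the hypothesis that $|\ord_p(\Reg(E_i/\Theta))|\to\infty$, and the proposition follows.

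The step I expect to need the most care is the comparison of $\Reg(E_i/\Theta)$ with $\C_\Theta(\Gamma_i)$: one must faithfully keep track both of the indices $m_{i,H}$ --- which appear precisely because $E_i(L)/\text{tors}$ can be a proper sublattice of $\Gamma_i^H$, and which are controllable only through a uniform bound on torsion --- and of the $|H|$-normalisations built into the definition of $\C_\Theta$, and then use the fact that $\C_\Theta$ genuinely depends on the integral lattice (this is exactly what makes regulator constants a finer invariant than rational characters), which is what allows Jordan--Zassenhaus to conclude.
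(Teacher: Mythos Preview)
Your argument is correct and follows the same core idea as the paper's proof: identify $\Reg(E_i/\Theta)$ with the regulator constant $\C_\Theta(\Gamma_i)$ of the Mordell--Weil lattice $\Gamma_i=E_i(F_i)/E_i(F_i)_{\rm tors}$, and then invoke the Jordan--Zassenhaus theorem to conclude that only finitely many values can occur once the rank is bounded. The paper simply asserts the equality $\Reg(E_i/\Theta)=\C_\Theta(\Gamma_i)$ and proceeds directly; you are more scrupulous in separating out the two correction factors (the indices $m_{i,H}=[\Gamma_i^H:E_i(L)/{\rm tors}]$ and the $|H|$-normalisations) and bounding each. This extra bookkeeping is not misplaced --- the paper's identification is a little quick, since $E_i(L)/{\rm tors}$ can indeed sit properly inside $\Gamma_i^H$ --- but it does not change the structure of the proof.

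One remark: your appeal to Merel's uniform boundedness theorem to control $m_{i,H}$ is unnecessarily heavy. The trace map $x\mapsto\sum_{h\in H}h(x)$ shows that $|H|\cdot\Gamma_i^H$ lies in the image of $E_i(L)/{\rm tors}$, so $m_{i,H}$ divides $|H|^{r_H}\le |G|^N$; this bounds the index factor purely in terms of $G$, $\Theta$ and $N$, with no input from the arithmetic of elliptic curves.
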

\begin{proof}
$E_i(F_i)/E_i(F_i)_\text{tors}$ is a $\bZ$-free $\bZ G$-module and the regulator is defined as the determinant of the N\'eron-Tate height pairing on a $\bZ$-basis of $E_i(F_i)/E_i(F_i)_\text{tors}$ so
\[\text{Reg}(E_i/\Theta)={\C}_\Theta(E_i(F_i)/E_i(F_i)_\text{tors}),\]
where $\text{Reg}(E_i/\Theta)$ is the quotient of regulators of $E$ over subfields of $F_i$. Now, by the Jordan-Zassenhaus theorem on integral representations \cite{Zas-1}, there exist only finitely many isomorphism classes of $\bZ$-free $\bZ G$-modules $\Gamma$ such that $\Gamma\otimes\bQ$ is isomorphic to a given finite dimensional rational representation. It follows that there are only finitely many such modules up to isomorphism of given rank over $\bZ$. Therefore, if $r(E_i/F_i)$ was bounded as $i\rightarrow\infty$, the set $\left\{ {\rm Reg}(E_i/\Theta)\right\}$ would be finite, contradicting the hypothesis of the theorem.
\end{proof}
We record another feature of regulator constants which will allow us to control the growth of the regulator quotient:
\begin{lem}\label{lem:unramExt}
Let $G$ be a finite group and let $\Theta = \sum_i H_i - \sum_j H'_j$ be a relation of permutation representations. Let $E/K$ be an elliptic curve over a number field and let $F/K$ be a Galois extension with Galois group $G$. If $v$ is a place of $K$ which is unramified in $F/K$ (or more generally for which all decomposition groups are cyclic) then $C_v(E/\Theta)=1$.
\end{lem}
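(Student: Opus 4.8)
The plan is to reduce the assertion to a purely local statement at $v$ and then to invoke the elementary fact that a cyclic group admits no non-trivial relation between permutation representations.

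First I would fix a place $\tilde v$ of $F$ above $v$ with decomposition group $D\leq G$. Since the decomposition groups of the places of $F$ above $v$ are conjugate to each other, the hypothesis allows us to assume that $D$ is cyclic (when $v$ is unramified in $F/K$ this group is generated by a Frobenius element, which is the case of primary interest), and then $F_{\tilde v}/K_v$ is Galois with group $D$. For each $i$ let $X_i$ be the set $G/H_i$ viewed as a $D$-set by restriction of the $G$-action. The places $w\mid v$ of $L_i=F^{H_i}$ are in bijection with the $D$-orbits on $X_i$, and Galois theory for $F_{\tilde v}/K_v$ identifies the factorisation $L_i\otimes_K K_v\cong\prod_{w\mid v}(L_i)_w$ with the orbit decomposition of $X_i$, which by Mackey's formula is $X_i=\bigsqcup_{DgH_i}D/(D\cap gH_ig^{-1})$; the orbit $D/(D\cap gH_ig^{-1})$ corresponds to a completion $(L_i)_w$ that is $K_v$-isomorphic to the fixed field $F_{\tilde v}^{D\cap gH_ig^{-1}}$. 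Since $E$ and $\omega$ are defined over $K$, the factor $C_w(E/L_i)=c_w(E/L_i)\,|\omega/\omega_w^0|_w$ depends only on the $K_v$-isomorphism class of $(L_i)_w$. Setting $\beta(D/U)$ equal to the modified local Tamagawa factor $c(E/F_{\tilde v}^{U})\cdot|\omega/\omega^0|$ of $E$ over the local field $F_{\tilde v}^{U}$ for each $U\leq D$, and extending by $\beta(Y\sqcup Z)=\beta(Y)\beta(Z)$, we obtain a group homomorphism $\beta\colon A(D)\to\bQ^\times$ from the Burnside ring of $D$ with $C_v(E/L_i)=\beta(X_i)$, and likewise $C_v(E/L_j')=\beta(X_j')$ for the analogous $D$-set $X_j'$ attached to $H_j'$.

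Next I would restrict the defining isomorphism $\bigoplus_i\bC[G/H_i]\cong\bigoplus_j\bC[G/H_j']$ of the relation $\Theta$ to $D$: this shows that $\sum_i[X_i]-\sum_j[X_j']$ lies in the kernel of the natural ring homomorphism $A(D)\to R_{\bC}(D)$ to the complex representation ring, i.e. it is a $D$-relation. When $D$ is cyclic this kernel is trivial: the permutation characters $\Ind_{D/U}\triv_U$, as $U$ ranges over the subgroups of $D$, are linearly independent, since the matrix recording which irreducible characters of $D$ are trivial on which subgroups of $D$ contains, as a square submatrix, the unitriangular (hence invertible) zeta matrix of the divisibility poset of $|D|$. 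Hence $\sum_i[X_i]=\sum_j[X_j']$ already in $A(D)$, and applying $\beta$ gives
\[C_v(E/\Theta)=\prod_iC_v(E/L_i)\Big/\prod_jC_v(E/L_j')=\beta\left(\sum_i[X_i]-\sum_j[X_j']\right)=\beta(0)=1.\]

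I expect the only step requiring real care to be the identification in the second paragraph of the completions $(L_i)_w$ above $v$ with the orbits of the restricted $D$-set $X_i$ --- standard Mackey/double-coset book-keeping --- after which the representation-theoretic input is immediate and the only property of $E$ used is that local Tamagawa factors are multiplicative over products of \'etale algebras. Alternatively one can avoid the Burnside ring entirely: for $D$ cyclic it suffices to observe directly that the multisets of completions $\{(L_i)_w\}_{i,\,w\mid v}$ and $\{(L_j')_w\}_{j,\,w\mid v}$ coincide, and then the equality of the two products of the $C_w$ is automatic.
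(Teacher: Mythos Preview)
Your argument is correct and follows essentially the same route as the paper: reduce to the decomposition group $D$ via Mackey, observe that the local Tamagawa contribution defines a function on the Burnside ring of $D$, and then use that a cyclic $D$ has no non-trivial relations. The paper packages the first two steps into the notion of a ``$D$-local'' function on the Burnside ring (citing the formalism of \cite[2.iii]{TVD-2}) and states the third as a fact, whereas you spell out the orbit/completion bookkeeping and supply a proof of the cyclic case --- but the underlying strategy is identical.
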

\begin{proof}
Quite generally, if $D<G$ is a subgroup and $\psi$ is a function on the Burnside ring of $G$ (such as $C_v: H\mapsto C_v(E/F^H)$ for example) which can be written as
\[\psi(H) = \prod_{x\in H\backslash G/D} \psi_D(x^{-1}Hx\cap D)\]
for $\psi_D$ a function on the Burnside ring of $D$ (i.e. if $\psi$ is "D-local" in the language of \cite{TVD-2}) and if $\psi_D$ is trivial on all $D$-relations then $\psi$ is trivial on all $G$-relations. This follows from Mackey decomposition and a rather intricate formalism introduced in \cite[2.iii]{TVD-2}. In our case, if $D$ is the decomposition group of some $w/v$ in $G$ then the function $C_v$ is $D$-local. But we assumed that $D$ was cyclic and cyclic groups have no non-trivial relations. Therefore we are done.
\end{proof}

\section{Large Selmer groups}
We are now ready to begin the proof of the main result. This will consist of two explicit constructions: of the elliptic curve and of the field extension.
\subsection{Dihedral extension of number fields via class field theory}
We will follow the notation in \cite{Coh-1} so that the construction is readily implementable on a computer using the algorithms described there.
\newline
\textbf{Notation.}
For a number field $M$ we fix the following notation:
\newline
\begin{tabular}{ l l }
  $\fm = \modm$ & a modulus of $M$, where $\fm_0$ is an integral ideal \\
  & of the field and $\fm_\infty$ is a set of real embeddings. \\
  $I_\fm$ & for a given modulus $\fm$, the multiplicative group of \\
  & fractional ideals which are co-prime to $\fm_0$. \\
  $P_\fm$ & for a given modulus $\fm$, the subgroup of $I_\fm$ generated by all \\
  & principal ideals $(a)$, $a\in M^\times$, such that $a\equiv 1\text{ mod }^*\fm$ \\
  & by which we mean that $\text{ord}_\fp(a-1)\geq\text{ord}_\fp(\fm_0)$ for all $\fp$ above $\fm_0$ \\
  & and $\sigma(a)>0$ for all embeddings $\sigma\in\fm_\infty$.\\
  $(\fm,U)$ & a congruence subgroup, i.e. $\fm$ is a modulus and $P_\fm \leq U \leq I_\fm$.
\end{tabular}
\begin{defn}
Two congruence subgroups $(\fm,U_\fm)$ and $(\fn,U_\fn)$ are said to be equivalent if $I_\fm\cap U_\fn = I_\fn\cap U_\fm$. The smallest $\fn$ such that $(\fm,U_\fm)$ is equivalent to $(\fn,U_\fm P_\fn)$ is called the conductor associated to $(\fm,U_\fm)$. This is equivalent to saying that the conductor is the smallest modulus $\fn$ such that the natural map $I_\fm/U_\fm\rightarrow I_\fn/U_\fm P_\fn$ is injective.
\end{defn}
The following is one of the main results of global class field theory:
\begin{thm}\label{thm:classField}
Given any modulus $\fm$ of $M$ and any congruence subgroup $U$, there exists a unique abelian extension $F/M$ such that
\begin{eqnarray*}
I_\fm/U & \widetilde{\longrightarrow} & {\rm Gal}(F/M)\\
\alpha & \mapsto & (\alpha,F/M)
\end{eqnarray*}
is a group isomorphism, where for a prime ideal $\fp$ of $M$ $(\fp,F/M)$ is the Frobenius automorphism at $\fp$. This isomorphism is called the Artin map. Moreover, two congruence subgroups $(\fm,U_\fm)$ and $(\fn,U_\fn)$ give the same field extension if and only if they are equivalent. We have
\begin{equation}\label{eq:Artin}
(\tau\alpha,F/M) = \tau^{-1}(\alpha,F/M)\tau\;\;\forall \tau\in {\rm Aut}(M).
\end{equation}
If $K$ is a subfield of $M$ and $M/K$ is Galois then $F/K$ is Galois if and only if $\tau(U)$ is equivalent to $U$ for all $\tau\in {\rm Gal}(M/K)$. If $\tau(\fm) = \fm$ for all $\tau\in{\rm Gal}(M/K)$ then this condition simplifies to $\tau(U) = U$ for all $\tau\in{\rm Gal}(M/K)$.

The primes that ramify in $F/M$ are precisely the ones that divide the conductor $\ff$ of $(\fm,U_\fm)$ and a prime $\fp$ is wildly ramified if and only if $\fp^2$ divides $\ff$.
\end{thm}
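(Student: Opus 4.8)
This is classical, so strictly speaking one may simply cite \cite[Ch.~3]{Coh-1}, which develops global class field theory in exactly the ideal-theoretic language of moduli and congruence subgroups used here. The plan is to derive all the assertions from the two pillars of the theory --- Artin reciprocity and the Existence Theorem --- imported in their idelic form, and then to translate back and forth between ideles and ideals.

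First I would note that $I_\fm/P_\fm$ is finite, hence so is $I_\fm/U$, and that under the usual dictionary the congruence subgroup $U$ corresponds to an open finite-index subgroup $N$ of the idele class group $\mathbb{A}_M^\times/M^\times$. The Existence Theorem then produces a \emph{unique} finite abelian $F/M$ with norm group $N$, and Artin reciprocity supplies the isomorphism $\mathbb{A}_M^\times/(M^\times N)\xrightarrow{\sim}\Gal(F/M)$; restricting to ideles supported away from $\fm$ identifies the source with $I_\fm/U$ and sends $\fp$ to its Frobenius, which yields the stated Artin map together with its uniqueness. For the claim that equivalent congruence subgroups give the same field, I would observe that $F$ depends only on $N$, and that $(\fm,U_\fm)$ and $(\fn,U_\fn)$ cut out the same $N$ exactly when $I_\fm\cap U_\fn = I_\fn\cap U_\fm$ --- i.e.\ when they are equivalent --- using that the Frobenius at a prime coprime to the conductor is independent of the admissible modulus chosen to compute it.

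Next, formula \eqref{eq:Artin} I would obtain from the functoriality of reciprocity: an automorphism $\tau$ of $M$, extended to $\bar M$, carries $F$ to an abelian extension of $M$, carries $I_\fm/U$ to $I_{\tau\fm}/\tau U$, and carries Frobenius elements to Frobenius elements, which is precisely the equivariance of the global Artin map under ${\rm Aut}(M)$. The Galois criterion then follows: $F/K$ is Galois iff $F$ is stable under every lift to $\bar K$ of every $\tau\in\Gal(M/K)$ (it is already $\Gal(\bar K/M)$-stable because $F/M$ is Galois), and by \eqref{eq:Artin} such a lift carries $F$ to the class field of $(\tau\fm,\tau U)$, so stability means exactly that $(\tau\fm,\tau U)$ is equivalent to $(\fm,U)$; if moreover $\tau(\fm)=\fm$ for all $\tau$, then two congruence subgroups with the common modulus $\fm$ are equivalent iff equal, so the condition collapses to $\tau(U)=U$. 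Finally, for the ramification statement I would identify the conductor $\ff$ of $(\fm,U_\fm)$ with the arithmetic conductor $\prod_\fp\fp^{a(\fp)}$ of $F/M$ and invoke local class field theory: $a(\fp)=0$, $a(\fp)=1$, $a(\fp)\geq 2$ according as $F_\fq/M_\fp$ is unramified, tamely ramified, or wildly ramified.

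The only genuinely deep inputs are Artin reciprocity and the Existence Theorem, which I would take as black boxes; everything else is bookkeeping, and the most delicate pieces of that bookkeeping --- the places where a careful argument is actually required rather than a citation --- are the two identifications just used: ``equivalence of congruence subgroups'' with ``equality of norm subgroups of the idele class group'', and the combinatorial conductor of a congruence subgroup with the arithmetic conductor of the associated extension (the conductor theorem).
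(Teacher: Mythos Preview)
The paper does not prove this theorem at all: it is stated as a classical result of global class field theory and used as a black box, with the surrounding text pointing to \cite{Coh-1} for the language of moduli and congruence subgroups. Your sketch is a reasonable outline of how one would derive the statement from Artin reciprocity and the Existence Theorem, but it goes beyond what the paper itself does; in the paper's context a bare citation suffices.
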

We will now use this result to construct dihedral extensions of $\bQ$ with a prescribed intermediate field and arbitrarily many ramified primes:
\begin{thm}\label{thm:dihedralExt}
Let $M=\bQ(\sqrt{d})$ be a quadratic number field with $d$ a square-free integer, and $p$ any odd prime number. Define the following sets of rational primes:
\begin{eqnarray*}
\frakS_1 & := &\left\{q {\rm\;rational\;odd\;prime }: q\;{\rm\;splits\;in\;}M/\bQ,\;q\equiv 1{\rm\;mod\;}p\right\}\\
\frakS_2 & := &\left\{q {\rm\;rational\;odd\;prime }: q\;{\rm\;is\;inert\;in\;}M/\bQ,\;q\equiv -1{\rm\;mod\;}p\right\}.
\end{eqnarray*}
Given any positive integers $k_1$ and $k_2$ there exists a Galois extension $F/\bQ$ with Galois group $D_{2p}$ such that $F$ contains $M$ and
\begin{enumerate}
\item[(i)] no rational primes outside of $\frakS_1\cup \frakS_2$ have ramification index divisible by $p$,
\item[(ii)] at least $k_1$ primes from $\frakS_1$ ramify in $F/\bQ$ and
\item[(iii)] unless $d=p\equiv 1\;{\rm mod}\;4$, at least $k_2$ primes from $\frakS_2$ ramify in $F/\bQ$.
\end{enumerate}
The same statement holds with condition {\rm(ii)} replaced by
\begin{enumerate}
\item[(ii)'] unless $d=p\equiv 1\;{\rm mod}\;4$, no primes from $\frakS_1$ have ramification index divisible by $p$;
\end{enumerate}
\end{thm}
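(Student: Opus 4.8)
The extension $F/M$ will be a cyclic degree-$p$ extension produced by class field theory, engineered so that the nontrivial $\tau\in\Gal(M/\bQ)$ acts on $\Gal(F/M)$ by inversion (which forces $\Gal(F/\bQ)\cong D_{2p}$) and so that its conductor is divisible by many primes from $\frakS_1$ (resp.\ $\frakS_2$). First one checks that $\frakS_1$ is infinite for every $M$ and that $\frakS_2$ is infinite unless $d=p\equiv1\bmod 4$, in which case $\frakS_2=\emptyset$: both are Chebotarev conditions on $\mathrm{Frob}_q$ inside $\Gal(\bQ(\sqrt d,\z)/\bQ)\hookrightarrow\Gal(\bQ(\sqrt d)/\bQ)\times\Gal(\bQ(\z)/\bQ)$, and the only obstruction occurs when $\bQ(\sqrt d)\subseteq\bQ(\z)$, i.e.\ $\bQ(\sqrt d)=\bQ(\sqrt{p^\ast})$ with $p^\ast=(-1)^{(p-1)/2}p$; there the condition ``$q$ inert in $M$, $q\equiv -1\bmod p$'' amounts to ``$-1$ is a non-square in $(\bZ/p)^\times$'', which holds exactly when $p\equiv 3\bmod 4$, while all other sign combinations remain realisable.

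Next, fix finite sets $T_1\subseteq\frakS_1$ and $T_2\subseteq\frakS_2$ of distinct primes, of sizes $N_1,N_2$ to be pinned down (linear in $k_1,k_2$), and take the $\tau$-stable modulus $\fm_0=\prod_{q\in T_1\cup T_2} q\mathcal O_M$ with empty archimedean part. Put $G=\mathrm{Cl}_{\fm_0}(M)$ and decompose its $p$-part as $G_p=G_p^+\oplus G_p^-$ under $\tau$ (possible since $p$ is odd), writing $V=G_p^-\otimes\bF_p$. Using the exact sequence $\mathcal O_M^\times\to(\mathcal O_M/\fm_0)^\times\to G\to\mathrm{Cl}(M)\to 1$ one analyses the $\tau$-action on the local contributions: for $q\in T_1$ split as $q\mathcal O_M=\fq\bar\fq$, $\tau$ interchanges the two copies of $(\mathcal O_M/\fq)^\times\cong\bF_q^\times$ and so contributes a one-dimensional $\tau=-1$ eigenspace (the anti-diagonal) to $(\mathcal O_M/\fm_0)^\times\otimes\bF_p$; for $q\in T_2$ inert, $\tau$ acts on $(\mathcal O_M/q\mathcal O_M)^\times\cong\bF_{q^2}^\times$ as $x\mapsto x^q=x^{-1}$ on the $p$-part (because $q\equiv-1\bmod p$), so its whole $p$-part lies in the $\tau=-1$ eigenspace. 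Thus each $q\in T_1\cup T_2$ gives a line $L_q$ in the minus part, and the resulting map $\bigoplus_q L_q\to V$ has kernel of dimension $\le\dim(\mathcal O_M^\times\otimes\bF_p)^-\le 1$ and cokernel of dimension $\le\dim_{\bF_p}\mathrm{Cl}(M)[p]$. Hence $\dim V\ge N_1+N_2-1$, and the image $\bar L_q\subseteq V$ of $L_q$ is nonzero for all but at most one $q$.

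Any nonzero $\phi\in V^\vee$, composed with the projection $G\to G_p^-\to V$, is a surjection $G\to\bZ/p$ with $\phi\circ\tau=-\phi$; pulling its kernel back to $I_{\fm_0}$ gives a congruence subgroup $U$ with $P_{\fm_0}\le U\le I_{\fm_0}$, $\tau(U)=U$ and $I_{\fm_0}/U\cong\bZ/p$, so Theorem~\ref{thm:classField} yields a $\bZ/p$-extension $F/M$ that is Galois over $\bQ$, and \eqref{eq:Artin} together with $\phi\circ\tau=-\phi$ makes any lift of $\tau$ act by inversion on $\Gal(F/M)$, whence $\Gal(F/\bQ)$ is nonabelian of order $2p$, i.e.\ $\cong D_{2p}$. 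Since the conductor $\ff$ divides $\fm_0$ and the primes of $T_1\cup T_2$ are unramified in $M/\bQ$, a rational prime can have ramification index divisible by $p$ in $F/\bQ$ only if it lies in $T_1\cup T_2\subseteq\frakS_1\cup\frakS_2$, which is (i); and $q\in T_1\cup T_2$ has ramification index $p$ in $F/\bQ$ exactly when $\fq\mid\ff$, i.e.\ when the inertia image $\phi|_{\bar L_q}$ is nonzero. It remains to exhibit $\phi$ that is nonvanishing on $\bar L_q$ for at least $k_1$ of the $q\in T_1$ and at least $k_2$ of the $q\in T_2$. For a fixed $q$ with $\bar L_q\ne 0$, the proportion of nonzero $\phi\in V^\vee$ with $\phi|_{\bar L_q}\ne 0$ equals $(p^{\dim V}-p^{\dim V-1})/(p^{\dim V}-1)>1-\tfrac1p$; averaging over $\phi$ and applying Markov's inequality to $N_i-\#\{q\in T_i:\phi|_{\bar L_q}\ne0\}$ shows that, once $N_i\ge 3p\,k_i$, each of the two bad events has probability below $\tfrac12$, so a union bound produces the desired $\phi$, hence an $F$ satisfying (i)--(iii). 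For the variant with (ii)$'$ one simply drops $T_1$, taking $\fm_0=\prod_{q\in T_2} q\mathcal O_M$ so that no prime of $\frakS_1$ divides the modulus at all, and runs the same count on $T_2$ alone; this needs $\frakS_2\ne\emptyset$, which is why the same exclusion reappears.

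The main obstacle is the middle step: one must control the minus part $G_p^-$ of the ray class group well enough to be sure a cyclic quotient of order $p$ with large conductor exists. What makes this possible is that both error terms — the image of $\mathcal O_M^\times$ and the $p$-rank of $\mathrm{Cl}(M)$ — are bounded independently of $N_1,N_2$, so the local contributions $L_q$ dominate, after which everything reduces to the elementary count of hyperplanes in $V^\vee$. A small technical point one must not skip is the identification of ``$q$ ramifies in $F/M$'' with ``$\phi$ is nonzero on $\bar L_q$'', which is precisely the conductor clause of Theorem~\ref{thm:classField}, the inertia group at $\fq$ being generated by the image of the local units at $\fq$.
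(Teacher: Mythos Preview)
Your argument is correct and takes a genuinely different route from the paper's. Both begin by checking, via Chebotarev in $\bQ(\sqrt d,\zeta_p)/\bQ$, that $\frakS_1$ is always infinite and $\frakS_2$ is infinite unless $d=p\equiv 1\bmod 4$. After that the paper works with \emph{small} moduli $\fm=qq'$ (two primes from one $\frakS_i$), shows that each $I_\fm/P_\fm$ admits a $\tau$-antiinvariant quotient of order $p$, and so obtains an infinite supply of $D_{2p}$-extensions $F_\fm/\bQ$ with pairwise disjoint ramification over $M$; the desired $F$ is then built \emph{inductively}, at each step passing to a ``diagonal'' $\bZ/p$-subfield of the compositum $F\cdot F_{k+1}$ so as to acquire one new ramified prime without losing the old ones. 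You instead fix a single large $\tau$-stable modulus $\fm_0=\prod_{q\in T_1\cup T_2} q\,\mathcal O_M$, analyse the minus eigenspace $V$ of $\mathrm{Cl}_{\fm_0}(M)\otimes\bF_p$, and use a hyperplane count in $V^\vee$ together with Markov/union bounds to find one character $\phi$ whose conductor hits many primes from each $T_i$ at once.

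One small correction: the kernel of $\bigoplus_q L_q\to V$ can pick up a contribution from $\mathrm{Cl}(M)[p]^{-}$ as well (the $\mathrm{Tor}$-term when tensoring $0\to H\to G\to\mathrm{Cl}(M)\to 0$ with $\bF_p$), not only from $(\mathcal O_M^\times\otimes\bF_p)^{-}$; so ``kernel $\le 1$'' and ``$\bar L_q=0$ for at most one $q$'' should read ``bounded by $1+\dim_{\bF_p}\mathrm{Cl}(M)[p]^{-}$''. As you yourself stress, what matters is that both error terms are bounded independently of $N_1,N_2$, so the counting argument is unaffected. The paper's inductive method is more explicitly constructive and avoids any averaging step; your method is more direct (one modulus, one character) and gives quantitative control on $\dim V$ that the paper's proof does not provide.
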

It is condition {\rm(ii)'} that we will need for our main result but we include the variant with condition
{\rm(ii)} for completeness.
\begin{proof}
We will find infinitely many dihedral extensions $F_i$ of $\bQ$ containing $M$ with disjoint sets of ramified primes in $F_i/M$. By taking "diagonal" subfields in their compositum we will create the required extension. To construct the extensions $F_i$ we will use the above results from class field theory by constructing moduli $\fm$ which will be fixed by the Galois group of $M/\bQ$ and such that $I_\fm/P_\fm$ will have a quotient $I_\fm/U$ of order $p$ with $U$ fixed by the Galois group of $M/\bQ$ and this Galois group acting as $x\mapsto x^{-1}$ on the quotient.

Let $\mathfrak U$ be the group of units of $M$ and for a modulus $\fm=(\fm_0,\fm_\infty)$ of $M$ define
\[\mathfrak U_\fm = \{u\in\mathfrak U: u\equiv 1\text{ mod }^*\fm\}.\]
Further define
\[I_\fm' = \{a\in M^\times:\;\text{ord}_\fp(a) = 0\;\forall \fp |\fm_0\}\]
and
\[P_\fm' = \{a\in I_\fm':\; a\equiv 1 \text{ mod }^*\fm\}.\]
Then we have the exact sequence
\begin{eqnarray}\label{eq:classGroup}
0\rightarrow \mathfrak U/\mathfrak U_\fm\rightarrow I_\fm'/P_\fm'\rightarrow I_\fm/P_\fm\rightarrow Cl({\eusm O}_M)\rightarrow 0.
\end{eqnarray}
The map $I_\fm'/P_\fm'\rightarrow I_\fm/P_\fm$ simply sends an element to the ideal it generates (or rather its equivalence class).
We will concentrate on the term $I_\fm'/P_\fm'$ for now.

First, we claim that by Dirichlet's prime number theorem both sets $\frakS_1$ and $\frakS_2$ are infinite, unless $d=p\equiv 1\;{\rm mod}\;4$, in which case $\frakS_1$ is infinite and $\frakS_2$ is empty. Indeed, this is clear when $p\neq d$. If $p=d$ and $p\equiv 3\text{ mod }4$ then
\[q\text{ splits in }M \Leftrightarrow \left(\frac{p}{q}\right) = 1 \Leftrightarrow \left(\frac{q}{p}\right) = (-1)^{\frac{q-1}{2}}\]
and so again both sets are infinite since the congruence condition modulo 4 and the congruence condition modulo $p$ can be satisfied simultaneously. If $p=d$ and $p\equiv 1\text{ mod }4$ then $\left(\frac{q}{p}\right) = \left(\frac{p}{q}\right)$
and so $q\equiv \pm 1\text{ mod }p \Rightarrow \left(\frac{q}{p}\right) = \left(\frac{p}{q}\right) = 1 \Rightarrow q$ splits in $M$.

We will henceforth assume that both sets are infinite since the proof (or rather the relevant part) just carries over to the other case. Define the following sequences of distinct moduli, always taking $\fm_\infty$ to be empty and dropping the subscript from $\fm_0$ to avoid index overload:
\[\fm_i = q_iq_i',\;q_i,q_i'\in \frakS_1,\;\;\;\widetilde{\fm_j} = \widetilde{q_j}\widetilde{q_j}',\;\widetilde{q_j},\widetilde{q_j}'\in \frakS_2\]
with all $q_i,q_i',\widetilde{q_j},\widetilde{q_j}'$ distinct.
Let $\tau$ be the non-trivial element of the Galois group of $M/\bQ$. It is clear that $\tau$ fixes all the chosen moduli. We make several easy observations:
\begin{itemize}
\item By the Chinese Remainder Theorem there is an isomorphism
\begin{eqnarray*}
I_\fm'/P_\fm' & \cong & \left(I_\fm'\cap {\eusm O}_M\right)/\left(P_\fm'\cap {\eusm O}_M\right) \\
& \cong & \left({\eusm O}_M/\fm_0\right)^\times \\
& \cong & \left({\eusm O}_M/q\right)^\times \times \left({\eusm O}_M/q'\right)^\times
\end{eqnarray*}
for $\fm=\fm_i$ or $\fm=\widetilde{\fm_j}$ and $q=q_i,q'=q_i'$ or $q=\widetilde{q_j},q'=\widetilde{q_j}'$, respectively.
\item If $q\in \frakS_1$ then writing $(q)=\fq\fq'$ in $M$ we get that
\[\left({\eusm O}/q\right)^\times = \left({\eusm O}/\fq\right)^\times\times\left({\eusm O}/\fq'\right)^\times \cong \left(\bF_q\right)^\times \times \left(\bF_q\right)^\times.\]
If $\left({\eusm O}/\fq\right)^\times=\left<x\right>$ then $\left({\eusm O}/\fq'\right)^\times=\left<y\right>$ where $y=\tau(x)$. Since $q\equiv 1{\rm\;mod\;}p$ we have that $R_\fm=\left<(x^p,1),(1,y^p),(x,y)\right>$ is a subgroup of $\left({\eusm O}/q\right)^\times$ of index $p$. Moreover, $\tau(R_\fm) = R_\fm$ and $\tau((x,1)) = (1,y) \equiv (x,1)^{-1} \text{ mod } R_\fm$.
\item If $q\in \frakS_2$ then $\left({\eusm O}/q\right)^\times = \left(\bF_{q^2}\right)^\times=\left<x\right>$, say, with the action of $\tau$ being given by $\tau(x)=x^q$. Since $q\equiv -1\text{ mod }p$, $R_\fm=\left<x^p\right>$ is a subgroup of index $p$. Moreover, $\tau(R_\fm)=R_\fm$ and $\tau(x)=x^q\equiv x^{-1}\text{ mod }R_\fm$.
\item So, for $\fm=\fm_i$ or $\fm=\widetilde{\fm_j}$, $I_\fm'/P_\fm'$ contains a quotient which is isomorphic to $\bZ/p\bZ\times \bZ/p\bZ$ on which $\tau$ acts as $x\mapsto x^{-1}$. Since, for $p$ an odd prime, in a quadratic field any quotient of $\mathfrak U$ can contain at most one copy of $\bZ/p\bZ$ we deduce from the exact sequence (\ref{eq:classGroup}) that there exists a subgroup of $I_\fm/P_\fm$ which has a quotient isomorphic to $\bZ/p\bZ$ and on which $\tau$ acts as $x\mapsto x^{-1}$. The structure theorem for abelian groups now implies that $I_\fm/P_\fm$ itself has such a quotient, $I_\fm/U_\fm$, say.
\item By Theorem \ref{thm:classField} we get, for each $\fm=\fm_i$ or $\fm=\widetilde{\fm_j}$, an abelian extension $F_\fm$ of $M$ of degree $p$ with conductor dividing $\fm$. Moreover, we have chosen $R_\fm$ and thus also $U_\fm$ in such a way that the extension $F_\fm/\bQ$ is Galois and by equation (\ref{eq:Artin}) the Galois group is $D_{2p}$.
\end{itemize}
Since only finitely many of the extensions $F_\fm/M$ can be unramified, we have constructed two sequences of distinct Galois extensions $F_i=F_{\fm_i}$ and $F_j'=F_{\widetilde{\fm_j}}$ of $\bQ$ with Galois groups $D_{2p}$ with disjoint sets of primes which ramify over $M$. In one sequence these primes lie above primes from $\frakS_1$ and in the other from $\frakS_2$. These extensions are all independent over $M$. Let $\fq_i$ ramify in $F_i/M$. We will now inductively construct an extension of $M$ which is Galois over $\bQ$ with Galois group $D_{2p}$ and in which arbitrarily many primes from $\frakS_1$ ramify. The case for $\frakS_2$ is completely analogous.

Suppose we have constructed an extension $F/M$ which is Galois over $\bQ$ with Galois group $D_{2p}$ and in which the primes $\fq_1,\ldots,\fq_k$ ramify. Consider the compositum of $F$ and $F_{k+1}$. Since the two fields are disjoint over $M$, the Galois group of their compositum is $\bZ/p\bZ \times \bZ/p\bZ = \left<g\right>\times \left<h\right>$, say. Clearly, $F$ is the maximal extension of $M$ inside $F_{k+1}F$ which is unramified at $\fq_{k+1}$ and similarly $F_{k+1}$ is the maximal extension which is unramified at $\fq$ for any $\fq \in \{\fq_1,\ldots,\fq_k\}$. Thus, taking the fixed field inside $F_{k+1}F$ of $(g,h)$ we get a Galois extension of $\bQ$ with Galois group $D_{2p}$ which is ramified at all the primes $\fq_1,\ldots,\fq_{k+1}$. This inductive procedure completes our construction.
\end{proof}

\begin{rmrk}
There are algorithms for computing the ray class group of a given modulus and for computing a defining polynomial for the field associated to a congruence subgroup. They are particularly well suited in our situation since there is a specialised efficient algorithm for totally real fields and another one for complex quadratic fields. Both are described in \cite[Chapter 6]{Coh-1}.
\end{rmrk}

\subsection{Elliptic curves in Legendre normal form and main result}
The last easy ingredient we need is:
\begin{lem}\label{lem:lemRed}
Let $E$ be an elliptic curve over $\bQ$ given in Legendre normal form by
\[E: y^2 = x(x-1)(x-\lambda)\]
where $\lambda\in\bZ$ is odd. Then
\begin{itemize}
	\item $E$ has split multiplicative reduction at all odd $q | (\lambda-1)$;
	\item $E$ has multiplicative reduction at all $q | \lambda$ and it is split multiplicative if and only if $q \equiv 1\;{\rm mod}\;4$;
	\item $E$ has potentially good reduction at 2 if and only if $\lambda \nequiv 1\;{\rm mod}\;32$. Moreover, if $\lambda \equiv 17\;{\rm mod}\;32$ then $E$ has good reduction at 2.
	\item $E$ has good reduction at all other primes.
\end{itemize}
\end{lem}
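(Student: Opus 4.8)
The plan is to read everything off the discriminant and the $j$-invariant of the given equation, supplemented by a tangent-cone analysis at the odd bad primes and one explicit change of variables at $2$. Writing the equation as $y^2=x^3-(1+\lambda)x^2+\lambda x$ gives
\[\Delta=16\,\lambda^2(\lambda-1)^2,\qquad j=\frac{256\,(\lambda^2-\lambda+1)^3}{\lambda^2(\lambda-1)^2}.\]
Since $\lambda$ is odd, $\lambda^2-\lambda+1$ is odd and $\gcd(\lambda,\lambda-1)=1$, so the primes of bad reduction are precisely $2$, the primes dividing $\lambda$, and the odd primes dividing $\lambda-1$; at every other prime $\Delta$ is a unit, so the model is minimal there with good reduction, which is the last bullet.

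First I would dispose of the odd bad primes by inspecting the singular fibre of the given integral model, which I claim is always a nodal cubic there, hence a minimal model with multiplicative reduction. If $q\mid\lambda-1$ is odd, the reduction mod $q$ is $y^2=x(x-1)^2$; moving the node to the origin turns this into $y^2=x^2(x+1)$, whose tangent cone $y^2=x^2$ splits over $\bF_q$, so the reduction is split multiplicative. If $q\mid\lambda$ (necessarily odd), the reduction is $y^2=x^2(x-1)$, with tangent cone $y^2=-x^2$ at the node, which splits over $\bF_q$ exactly when $-1$ is a square modulo $q$, i.e.\ when $q\equiv1\pmod4$. This gives the first two bullets.

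The prime $2$ is where the work is, because the given model is never minimal there: $v_2(\Delta)=4+2\,v_2(\lambda-1)\ge6$. One computes $v_2(j)=8-2\,v_2(\lambda-1)$, so by the standard criterion that potentially good reduction is equivalent to integrality of $j$, $E$ has potentially good reduction at $2$ iff $v_2(\lambda-1)\le4$, i.e.\ iff $\lambda\not\equiv1\pmod{32}$. For the sharper claim I would exhibit a good model directly: if $\lambda\equiv17\pmod{32}$, write $\lambda-1=16u$ with $u$ a $2$-adic unit, noting that then also $\lambda\equiv1\pmod{16}$. The substitution $x=4X+1$, $y=8Y+4X$ transforms the equation into
\[Y^2+XY=X^3-4uX^2-uX,\]
whose coefficients lie in $\bZ_2$ precisely because $\lambda\equiv1\pmod{16}$, and whose discriminant is $u^2\lambda^2$, a unit in $\bZ_2$; hence $E$ has good reduction at $2$. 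The only genuinely delicate point is finding this substitution rather than grinding through Tate's algorithm: the two congruences $\lambda\equiv1\pmod{16}$ and $v_2(\lambda-1)=4$ are exactly what make the new Weierstrass coefficients integral and the new discriminant a unit.
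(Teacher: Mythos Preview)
Your argument is correct and follows essentially the same route as the paper: compute $\Delta$ and $j$, use the tangent-cone (equivalently, the $T^2+a_1T-a_2$) criterion at the odd bad primes after translating the node to the origin, invoke integrality of $j$ for the potentially good statement at $2$, and apply the very same substitution $x=4X+1$, $y=8Y+4X$ for $\lambda\equiv 17\pmod{32}$. The only cosmetic differences are that the paper cites the $c_4$-unit criterion for multiplicative reduction and the Silverman split/non-split test explicitly, whereas you read both off the nodal fibre directly; and you carry the $2$-adic computation a step further by writing out the transformed equation and its discriminant.
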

\begin{proof}
We use the standard notation for the invariants $\Delta$ and $c_4$ associated to a Weierstrass equation for $E$ (see \cite[Ch. III \S 1]{Sil-1}). If $E$ is given in Legendre normal form as above then we have
\[c_4 = 16(\lambda^2-\lambda+1)\;\text{ and }\;\Delta = 16\lambda^2(\lambda-1)^2.\]
Thus the primes of bad reduction divide $\lambda$ or $\lambda-1$. Moreover for any such odd prime $q$, $c_4$ is a $q$-adic unit and so $E$ has multiplicative reduction at $q$ (\cite[Ch. VII Prop. 5.1]{Sil-1}). To determine whether it is split or non-split we use the following criterion (\cite[p. 366]{Sil-2}): let $E/K$ be given by a Weierstrass equation with the coefficients $a_1,\ldots,a_6$ and assume that it has multiplicative reduction at a prime $\fq$, the singular point being $(0,0)$. Then the reduction is split multiplicative if and only if the polynomial $T^2+a_1T-a_2$ splits over the residue field at $\fq$.

In our case, if $q|\lambda$ then the singular point of the reduction modulo $q$ is $(0,0)$ and $a_1 = 0$, $a_2 = -\lambda-1\equiv -1\text{ mod }q$. So the polynomial splits if and only if -1 is a square modulo $q$.

If $q|\lambda-1$ then perform the change of variables $x=x'+1$. The singular point again becomes $(0,0)$ and $a_1 = 0$, $a_2 = 2-\lambda\equiv 1\text{ mod }q$ and so the polynomial always splits.

Finally, $E$ has potentially good reduction at 2 if and only if the $j$-invariant is a 2-adic integer. But $\lambda$ is odd, so
\[j = c_4^3/\Delta = 16^2(\lambda^2-\lambda+1)/\lambda^2(\lambda-1)^2\]
is a 2-adic integer if and only if $\lambda-1$ is not divisible by 32. If $\lambda \equiv 17\text{ mod }32$ then it is easily seen that the substitution $x=4x'+1$, $y=8y'+4x'$ gives a Weierstrass equation which is integral with respect to 2 and with $\Delta$ a 2-adic unit.
\end{proof}
\begin{lem}\label{lem:largeTam}
Given a prime number $p>7$, a quadratic number field $M$ such that $M\neq \bQ(\sqrt{p})$ if $p\equiv 1{\rm\;mod\;}4$ and a positive integer $n$, there exists a semi-stable elliptic curve $E/\bQ$ and a Galois extension $F/\bQ$ containing $M$ with Galois group $D_{2p}$ such that
\[C(E/\Theta) = p^{-m},\]
for $m\geq n$, where $\Theta$ is the relation of permutation representations from Example \ref{ex:impEx}.
\end{lem}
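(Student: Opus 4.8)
The plan is to choose the $D_{2p}$-extension $F/\bQ$ first and only afterwards to pick the elliptic curve. I would apply Theorem~\ref{thm:dihedralExt} with $k_2=n$ and with condition~{\rm(ii)'} in force; this is permitted because the hypothesis $M\neq\bQ(\sqrt p)$ when $p\equiv 1\bmod 4$ means we are never in the excluded case $d=p\equiv 1\bmod 4$. The outcome is a Galois extension $F/\bQ$ with group $D_{2p}$ that contains $M$ and for which the set
\[T=\{q\ \text{a rational prime}:\ p\mid e_q(F/\bQ)\}\]
of primes whose ramification index in $F/\bQ$ is divisible by $p$ is finite, is contained in $\frakS_2$ (by conditions~{\rm(i)} and~{\rm(ii)'}), and satisfies $|T|\geq n$ (by condition~{\rm(iii)}: a prime of $\frakS_2$ is inert in $M$, so if it ramifies in $F/\bQ$ it is in fact totally ramified in $F/M$, and hence its ramification index in $F/\bQ$ equals $p$).

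The next step is to pin down the local structure at each prime. For $q\in T$ the prime $q$ is inert in $M/\bQ$ and totally ramified in $F/M$, so there is a single prime of $F$ above $q$, with ramification index $p$ and residue degree $2$, and its decomposition group is the whole of $D_{2p}$. For $q\notin T$ every decomposition group at $q$ is a proper subgroup of $D_{2p}$: a decomposition group equal to $D_{2p}$ would have a normal inertia subgroup which is non-trivial (since $D_{2p}$ is not cyclic), hence equal to $C_p$ or $D_{2p}$, hence $p\mid e_q$, a contradiction. Since $D_{2p}$ is the only non-cyclic subgroup of itself, every decomposition group at $q\notin T$ is cyclic, so Lemma~\ref{lem:unramExt} gives $C_q(E/\Theta)=1$ for every such $q$ and every $E/\bQ$. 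Hence only the primes of $T$ can contribute to $C(E/\Theta)$.

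Now I would choose $\lambda\in\bZ$ with $\lambda>1$, $\lambda\equiv 1\pmod{\prod_{q\in T}q}$ and $\lambda\equiv 17\pmod{32}$ — possible by the Chinese Remainder Theorem because the primes in $T$ are odd — and set $E:\ y^2=x(x-1)(x-\lambda)$. By Lemma~\ref{lem:lemRed}, $E$ has good reduction at $2$, multiplicative reduction at every odd prime dividing $\lambda(\lambda-1)$ and good reduction elsewhere, so $E$ is semi-stable over $\bQ$; and since every $q\in T$ divides $\lambda-1$, the reduction of $E$ at each $q\in T$ is \emph{split} multiplicative. By the Remark following Example~\ref{ex:impEx} we may replace $C$ by $c$ in the relation, and by the computation in Example~\ref{ex:impEx} (split multiplicative reduction with a single prime of $F$ above $q$ of ramification index $p$) we get $c_q(E/\Theta)=1/p$ for every $q\in T$. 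Combining this with the previous step,
\[C(E/\Theta)=c(E/\Theta)=\prod_{q\in T}c_q(E/\Theta)=p^{-|T|},\]
which proves the lemma with $m=|T|\geq n$.

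The step I expect to be delicate is the Tamagawa bookkeeping at the primes of $T$. The competing local scenario there is \emph{non}-split multiplicative reduction at an inert, totally-in-$F/M$-ramified prime, and this contributes a factor $p$ rather than $p^{-1}$: such reduction becomes split over the unramified quadratic subextension $M$, which reverses the sign of the exponent. Imposing $\lambda\equiv 1\pmod q$ for every $q\in T$ is precisely what rules it out. A secondary thing to confirm is that the remaining bad primes of $E$ (divisors of $\lambda(\lambda-1)$ not in $T$) and the primes ramifying in $M/\bQ$ really do not contribute; this is taken care of uniformly by Lemma~\ref{lem:unramExt} via the subgroup structure of $D_{2p}$ noted above.
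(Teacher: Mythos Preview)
Your proof is correct and follows essentially the same route as the paper: invoke Theorem~\ref{thm:dihedralExt} with condition~{\rm(ii)'} to get $F$, then choose $\lambda$ congruent to $1$ modulo the inert ramified primes and to $17$ modulo $32$, and finish with Example~\ref{ex:impEx} and Lemma~\ref{lem:unramExt}. The only cosmetic differences are that the paper writes down the explicit solution $\lambda=16\prod_i q_i+1$ rather than appealing to CRT, and that your argument for cyclicity of decomposition groups at primes outside $T$ (via the normal-subgroup structure of $D_{2p}$) is a bit more explicit than the paper's one-line remark that the remaining primes are unramified in $F/M$.
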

\begin{proof}
Take a dihedral extension $F$ of $\bQ$ containing $M$ such that $m\geq n$ primes $q_1,\ldots,q_m$
that are inert in $M/\bQ$ ramify in $F/M$ and no other primes of $M$ ramify in $F$.
Such an $F$ exists by Theorem \ref{thm:dihedralExt}. Take
\[\lambda=16\prod_{i=1}^{m}q_i+1.\]
Then by Lemma \ref{lem:lemRed}, $E:y^2=x(x-1)(x-\lambda)$ is semi-stable and has split multiplicative reduction at all these $q_i$. All other primes are unramified in $F/M$ and thus have cyclic decomposition groups. By Lemma \ref{lem:unramExt} and by the Example \ref{ex:impEx} the Tamagawa quotients $C_{q_i}(E/\Theta)$ are $1/p$ for each of the primes $q_{1},\ldots,q_{m}$ and are 1 for all other primes, so $E$ and $F$ are as required.
\end{proof}
\begin{thm}\label{thm:mainRes}
Let $p$ be an odd prime number and $M/\bQ$ any quadratic field but if $p\equiv 1\text{ mod }4$ then assume that $M\neq\bQ(\sqrt{p})$. Assume that $p$-primary parts of Tate-Shafarevich groups of elliptic curves over number fields are always finite. Then the quantity 
\[p^{r(E/F)}\times\#\sha(E/F)[p^\infty]\]
is unbounded as $E$ varies over elliptic curves over $\bQ$ and $F/\bQ$ varies over Galois extensions with dihedral Galois group of order $2p$ containing $M$.
\end{thm}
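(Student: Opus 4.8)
The plan is to feed the large Tamagawa quotient produced in Lemma~\ref{lem:largeTam} into the $p$-adic refinement of Theorem~\ref{thm:compThm} and then to separate the contribution of $\sha(E/F)$ from that of the remaining subfields. Fix $p$ and $M$ as in the statement and let $n$ be given, and write $\Theta=1-2C_2-C_p+2G$ for the dihedral relation of Example~\ref{ex:impEx}, so that for any $D_{2p}$-extension $F/\bQ$ containing $M$ one has $F^{C_p}=M$, $F^{C_2}=L$ with $[L:\bQ]=p$, and $F^G=\bQ$. I would first apply Lemma~\ref{lem:largeTam} (for $p>7$; for $p\in\{3,5,7\}$ the same construction works, or one may appeal to the references in the introduction) to obtain a semi-stable $E/\bQ$ and such an $F$ with $C(E/\Theta)=p^{-m}$ for some $m\ge n$. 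Under the standing hypothesis that $p$-primary parts of Tate--Shafarevich groups over number fields are finite, the final sentence of Theorem~\ref{thm:compThm}, written in the shorthand of \eqref{eq:mainEqn}, gives
\[
\ord_p\!\bigl(\#\sha(E/\Theta)[p^\infty]\bigr)+\ord_p\!\bigl(\Reg(E/\Theta)\bigr)=2\,\ord_p|E(\Theta)_{\rm tors}|+m .
\]
Since $F,M,L,\bQ$ all have degree at most $2p$ over $\bQ$, uniform boundedness of torsion over number fields of degree at most $2p$ (Merel) confines $|E(\Theta)_{\rm tors}|$ to a finite set depending only on $p$, so the right-hand side is $m+O(1)$ with implied constant depending only on $p$.

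Next I would argue by contradiction: suppose $p^{r(E/F)}\#\sha(E/F)[p^\infty]\le p^{N}$ for every elliptic curve $E/\bQ$ and every such dihedral $F$. Then $r(E/F)\le N$, so $E(F)/E(F)_{\rm tors}$ is a $\bZ[D_{2p}]$-lattice of rank at most $N$; by the Jordan--Zassenhaus theorem only finitely many such lattices exist up to isomorphism, hence $\Reg(E/\Theta)=\C_\Theta(E(F)/E(F)_{\rm tors})$ takes finitely many rational values and $\ord_p(\Reg(E/\Theta))=O(1)$. The identity above then forces $\ord_p(\#\sha(E/\Theta)[p^\infty])=m+O(1)$, which is unbounded as $n\to\infty$. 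Expanding the relation,
\[
\#\sha(E/\Theta)[p^\infty]=\frac{\#\sha(E/F)[p^\infty]\,\#\sha(E/\bQ)[p^\infty]^{2}}{\#\sha(E/L)[p^\infty]^{2}\,\#\sha(E/M)[p^\infty]},
\]
and every factor is a power of $p$, so $\ord_p(\#\sha(E/\Theta)[p^\infty])\le\ord_p(\#\sha(E/F)[p^\infty])+2\,\ord_p(\#\sha(E/\bQ)[p^\infty])$. The restriction map $\sha(E/\bQ)[p^\infty]\to\sha(E/F)[p^\infty]$ has kernel contained in $H^1(\Gal(F/\bQ),E(F))$ by inflation--restriction; since $\Gal(F/\bQ)\cong D_{2p}$ is fixed, $E(F)_{\rm tors}$ is bounded, and $r(E/F)\le N$ makes $E(F)$ one of finitely many $\bZ[D_{2p}]$-modules, this cohomology group has order at most some $B=B(p,N)$, whence $\#\sha(E/\bQ)[p^\infty]\le B\,\#\sha(E/F)[p^\infty]$. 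Combining the last estimates gives $m+O(1)\le 3\,\ord_p(\#\sha(E/F)[p^\infty])+2\log_p B\le 3N+2\log_p B$, which is absurd once $n$ exceeds the resulting constant.

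The delicate point — and the one I expect to require the most care to phrase correctly — is this last step: because $\bQ$ enters $\Theta$ with a positive coefficient, the growth of $\#\sha(E/\Theta)[p^\infty]$ could a priori be explained by growth of $\#\sha(E/\bQ)[p^\infty]$ alone, and the argument excludes this only by exploiting that a bound on the target quantity bounds $r(E/F)$, hence bounds the $\bZ[D_{2p}]$-module structure of $E(F)$, hence bounds both the regulator constant and the inflation--restriction kernel uniformly. Everything else is a direct application of Lemma~\ref{lem:largeTam}, Theorem~\ref{thm:compThm} and the rationality and Jordan--Zassenhaus properties of regulator constants recalled above.
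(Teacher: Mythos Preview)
Your proposal is correct and follows essentially the same strategy as the paper: feed Lemma~\ref{lem:largeTam} into the $p$-part of equation~\eqref{eq:mainEqn}, then use Jordan--Zassenhaus (Proposition~\ref{prop:largeRank}) to control the regulator term and inflation--restriction to compare $\sha$ over $\bQ$ with $\sha$ over $F$.

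The one place where the paper does things more cleanly is the torsion and inflation--restriction step. Rather than invoking Merel to bound $|E(\Theta)_{\rm tors}|$ and then bounding $H^1(\Gal(F/\bQ),E(F))$ via the contradiction hypothesis on the rank, the paper observes that for the specific semi-stable curves produced by Lemma~\ref{lem:largeTam} with $p>7$, results of Mazur and Serre force $G_\bQ$ to act on $E[p]$ through all of $\text{GL}_2(\bF_p)$, so adjoining a $p$-torsion point gives an extension of degree $p^2-1>2p$ and hence $E(F)[p^\infty]=0$ outright. This kills the torsion term exactly and makes the kernel of $\pInftySelmer(E/\bQ)\to\pInftySelmer(E/F)$, which sits inside $H^1(\Gal(F/\bQ),E(F)[p^\infty])$, equal to zero; no uniform boundedness and no rank-dependent constant $B(p,N)$ are needed. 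Your route works, but the paper's gives sharper control with lighter machinery.
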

\begin{proof}
This is known for $p\leq 7$ (see introduction for references), so for simplicity assume $p>7$. The proof will proceed by considering the $p$-part of equation (\ref{eq:mainEqn}). Let $\Theta$ be the relation of permutation representations of the dihedral group of order $2p$ considered in Example \ref{ex:impEx}. If $E/\bQ$ is a semi-stable elliptic curve then by \cite{Maz-1} and \cite{Maz-2} $E/\bQ$ has no $p$-torsion and by \cite[\S 21 Proposition 21 and remark following Lemma 6]{Ser-1} the absolute Galois group of $\bQ$ acts on $E[p]$ as $\text{GL}_2(\bF_p)$. Thus adjoining the co-ordinates of a $p$-torsion point to $\bQ$ defines an extension which is the fixed field of a Borel subgroup of $\text{GL}_2(\bF_p)$ and so has degree $p^2-1>2p$ over $\bQ$. Thus $E$ can have no $p$-torsion over $F$. Next, note that the kernel
\[\text{ker}(\pInftySelmer(E/\bQ)\rightarrow\pInftySelmer(E/F))\]
is contained in $H^1(\Gal(F/\bQ),E(F)[p^\infty])$ and is therefore 0 since $E(F)[p^\infty]$ is trivial as we have just established. But recall that the numerator of $\sha(E/\Theta)$ is $\sha(E/\bQ)^2\sha(E/F)$. Thus, in this scenario, by Proposition \ref{prop:largeRank} it suffices to show that
\[p^{\ord_p(\text{Reg}(E/\Theta))}\times\#\sha(E/\Theta)[p^{\infty}])\]
can be made arbitrarily large. So by equation (\ref{eq:mainEqn}) we need to make the $p$-part of the inverse of the Tamagawa quotient arbitrarily large. The result now immediately follows from Lemma \ref{lem:largeTam}.
\end{proof}
It remains to prove that in fact the $p$-Selmer gets large in our extensions and not just the $p^\infty$-Selmer. This will be more naturally done in the next section.
\section{Unconditional statements}\label{sec:uncond}
We now want to drop the assumption that Tate-Shafarevich groups are finite. We recall the relevant result from \cite{TVD-1}.
\begin{defn} Given an isogeny $\psi: A\rightarrow B$ of Abelian varieties over $K$, define
\begin{eqnarray*}
Q(\psi) = |\text{coker}(\psi:A(K)/A(K)_{\text{tors}}\rightarrow B(K)/B(K)_{\text{tors})}|\times\\
  \times |\text{ker}(\psi:\sha(A/K)_\text{div}\rightarrow \sha(B/K)_\text{div})|
\end{eqnarray*}
where $\sha_\text{div}$ denotes the divisible part of the Tate-Shafarevich group.
\end{defn}
\begin{thm}[\cite{TVD-1}, Theorem 4.3]
Let $\phi:A\rightarrow B$ be an isogeny of abelian varieties over a number field $K$ and $\phi^t:B^t\rightarrow A^t$ its dual isogeny. Let $\omega_A$ and $\omega_B$ be holomorphic $n$-forms on $A$ and $B$, respectively, where $n={\rm dim}\;A$ and set
\[\Omega_A = \prod_{\stackrel{v|\infty}{\text{real}}}\int_{A(K_v)}|\omega_A|.\prod_{\stackrel{v|\infty}{\text{complex}}}2\int_{A(K_v)}\omega_A\wedge\overline{\omega_A}\]
and write $\sha_0(A/K)$ for $\sha(A/K)$ modulo its divisible part, define $\Omega_B$ and $\sha_0(B/K)$ similarly. Then we have
\begin{eqnarray}\label{eq:SelmerEqn}
\frac{|B(K)_{\rm tors}|}{|A(K)_{\rm tors}|}\frac{|B^t(K)_{\rm tors}|}{|A^t(K)_{\rm tors}|}\frac{C(A/K)}{C(B/K)}\frac{\Omega_A}{\Omega_B}\prod_{q|{\rm deg}\;\phi}\frac{\#\sha_0(A/K)[q^\infty]}{\#\sha_0(B/K)[q^\infty]}=\frac{Q(\phi^t)}{Q(\phi)},
\end{eqnarray}
where the product is taken over prime divisors of ${\rm deg}\;\phi$.
\end{thm}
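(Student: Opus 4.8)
The statement is a refinement of Cassels' theorem that the Birch--Swinnerton-Dyer quotient is invariant under isogeny; the additional content is that it keeps track of the abelian varieties together with their duals and of the (possibly non-trivial) divisible part of $\sha$. The plan is to study the $\phi$-Selmer group of $A$, pair it against the $\phi^t$-Selmer group of $B^t$ via global duality, evaluate the resulting product of local terms place by place, and collapse that product using the product formula for $K$.

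\textbf{Step 1 (cohomological bookkeeping).} The Kummer sequence attached to $\phi$ gives
\[0\to B(K)/\phi A(K)\to \mathrm{Sel}^{(\phi)}(A/K)\to \sha(A/K)[\phi]\to 0,\]
and analogously for $\phi^t\colon B^t\to A^t$. Replacing $\phi$ by $\phi^n$, letting $n\to\infty$, and chasing the snake lemma through $0\to\sha(A/K)_{\mathrm{div}}\to\sha(A/K)\to\sha_0(A/K)\to 0$ converts the $\sha[\phi]$-terms into the factors $\#\sha_0(A/K)[q^\infty]$ for $q\mid\deg\phi$ together with $\#\ker(\phi\colon\sha(A/K)_{\mathrm{div}}\to\sha(B/K)_{\mathrm{div}})$, i.e.\ the divisible-part factor of $Q(\phi)$; passing from $A(K)$ to $A(K)/A(K)_{\mathrm{tors}}$ produces the cokernel factor of $Q(\phi)$ together with the torsion ratios $\#B(K)_{\mathrm{tors}}/\#A(K)_{\mathrm{tors}}$ and the $\#A[\phi](K)$ contributions. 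The definitions of $Q(\phi)$ and $Q(\phi^t)$ are engineered precisely so that these sequences close up without assuming $\sha$ finite.

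\textbf{Steps 2 and 3 (duality, local terms, product formula).} Since $B^t[\phi^t]$ is the Cartier dual of $A[\phi]$, Poitou--Tate duality relates $\#\mathrm{Sel}^{(\phi)}(A/K)$ and $\#\mathrm{Sel}^{(\phi^t)}(B^t/K)$ to a product over all places $v$ of $K$ of the local factors $\#\mathrm{coker}(\phi\colon A(K_v)\to B(K_v))\big/\#A[\phi](K_v)$, all but finitely many of which equal $1$. At a finite place of good reduction with $v\nmid\deg\phi$ the local factor is $1$; at each remaining finite place, Tate's local isogeny lemma (built from the compatibility of Néron models and component groups with $\phi$) expresses $\#\mathrm{coker}(\phi\colon A(K_v)\to B(K_v))\big/\#\ker(\phi\colon A(K_v)\to B(K_v))$ in terms of $c_v(A)/c_v(B)$ and $|\phi^*\omega_B/\omega_A|_v$; at an archimedean place, a change of variables along $\phi$ in the Haar measure on $A(K_v)$ yields the same expression with the local Tamagawa number replaced by the period integral $\Omega_{A,v}/\Omega_{B,v}$, again up to $|\phi^*\omega_B/\omega_A|_v$. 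Multiplying over all $v$, the differential ratios combine to $\prod_v|\phi^*\omega_B/\omega_A|_v=1$ by the product formula, so what survives is exactly $\tfrac{C(A/K)}{C(B/K)}\tfrac{\Omega_A}{\Omega_B}$; assembling this with the output of Step 1 yields the asserted identity.

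\textbf{Main obstacle.} The delicate part is Step 1: since $\sha$ need not be finite one cannot simply count cohomology groups, so one must argue throughout with $\sha_0$ and with the auxiliary quantities $Q(\phi)$, $Q(\phi^t)$, taking care that the exponent of each prime $q\mid\deg\phi$ comes out correctly in the limit over $\phi^n$ and that the divisible part is fully absorbed into $Q$. The archimedean period computation in Step 3 is the other point that requires care, since the normalisation of $\Omega_A,\Omega_B$ and the orientation of the change of variables under $\phi$ must be tracked precisely for the differential ratios to cancel cleanly against the product formula.
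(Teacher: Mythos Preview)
The paper does not prove this theorem at all: it is quoted verbatim as \cite[Theorem~4.3]{TVD-1} and then immediately used. So there is no ``paper's own proof'' to compare against; the only question is whether your sketch matches the argument in the cited source.

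Your outline is essentially the standard Cassels--Tate argument (as in Milne's \emph{Arithmetic Duality Theorems} and in \cite{TVD-1}): take the $\phi$- and $\phi^t$-Selmer sequences, use Poitou--Tate duality to convert the global ratio into a product of local terms $\#\mathrm{coker}(\phi\colon A(K_v)\to B(K_v))/\#A[\phi](K_v)$, evaluate these at finite places via N\'eron models (picking up $c_v(A)/c_v(B)$ and $|\phi^*\omega_B/\omega_A|_v$) and at infinite places via the change-of-variables in the period integral, and kill the differential factors with the product formula. That is exactly the route taken in \cite{TVD-1}. The one point where your write-up is slightly off is Step~1: there is no need to pass to $\phi^n$ and take a limit. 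One works directly with $\phi$, and the decomposition of $\sha(A)[\phi]$ into its contribution from $\sha_0$ and from $\sha_{\mathrm{div}}$ comes from the snake lemma applied to the single map $\phi$ on the short exact sequence $0\to\sha_{\mathrm{div}}\to\sha\to\sha_0\to 0$; the passage from $\sha_0(A)[\phi]$ to the full $q$-primary parts $\sha_0[q^\infty]$ is a finite-group count (the Cassels--Tate pairing on $\sha_0$ is non-degenerate, so $\#\sha_0(A)[q^\infty]=\#\sha_0(B)[q^\infty]$ away from $q\mid\deg\phi$, and for $q\mid\deg\phi$ the quotient is determined by the map induced by $\phi$). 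Apart from that cosmetic point, your plan is correct and aligned with the cited proof.
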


We will modify this equation slightly so that it resembles more closely equation (\ref{eq:mainEqn}).
Let $r$ be the common rank of the groups of $K$-rational points of the abelian varieties $A$,
$B$, $A^t$ and $B^t$ and
let $a_1,\ldots,a_r$ be elements of $A(K)$ which generate a subgroup of index $|A(K)_{\rm tors}|$ and
$a_1',\ldots,a_r'$ be elements of $A^t(K)$ which generate a subgroup of index $A^t(K)_{\rm tors}$ and let
$b_1,\ldots,b_r$ and $b_1',\ldots,b_r'$ be defined analogously for $B$.
Milne observes in \cite[I.7]{Mil-2} that the pairing which features in the Birch and Swinnerton-Dyer
conjecture is functorial, i.e.
\[\left<\phi^t(b_j'),a_i\right> = \left<b_j',\phi(a_i)\right>.\]
It follows that
\[1 = \frac{\det\left(\left<\phi^t(b_j'),a_i\right>\right)}{\det\left(\left<b_j',\phi(a_i)\right>\right)}
= \frac{\det\left(\left<a_j',a_i\right>\right)}{\det\left(\left<b_j',b_i\right>\right)}\cdot
\frac{|\text{coker}(\phi^t:B^t(K)/B^t(K)_{\text{tors}}\rightarrow A^t(K)/A^t(K)_{\text{tors}})|}
{|\text{coker}(\phi:A(K)/A(K)_{\text{tors}}\rightarrow B(K)/B(K)_{\text{tors}})|},\]
so the right hand side in equation (\ref{eq:SelmerEqn}) becomes
\[\frac{|\text{ker}(\phi^t:\sha(B^t/K)_\text{div}\rightarrow \sha(A^t/K)_\text{div})|}{|\text{ker}(\phi:\sha(A/K)_\text{div}\rightarrow \sha(B/K)_\text{div})|}\cdot \frac{\det\left(\left<b_j',b_i\right>\right)}{\det\left(\left<a_j',a_i\right>\right)}.\]
Now let $G$ be a finite group and
\[\Theta = \sum_i H_i - \sum_j H_j'\]
a $G$-relation. Let $E/K$ be an elliptic curve and $F/K$ be a Galois extension with Galois group $G$, let $L_i = F^{H_i}$ and $L_j' = F^{H_j'}$ and denote by $W_{L_i/K}(E)$, $W_{L_j'/K}(E)$ the Weil restrictions of scalars. As explained in \cite[\S 2]{Mil-1} and in \cite[\S 4]{TVD-1}, given a $G$-injection
\[f:\oplus_i\bZ[G/H_i]\rightarrow \oplus_j\bZ[G/H_j']\]
with finite co-kernel of order $d$, we can construct an isogeny of abelian varieties
\[\phi:\prod_i W_{L_i/K}(E)\rightarrow \prod_j W_{L_j'/K}(E)\]
of degree $d^2$. If we set $A=\prod_i W_{L_i/K}(E)$ and $B=\prod_j W_{L_j'/K}(E)$ then $C(A/K)/C(B/K)=C(E/\Theta)$ and $\frac{\det\left(\left<a_j',a_i\right>\right)}{\det\left(\left<b_j',b_i\right>\right)} = \C_\Theta\left(E(F)/E(F)_{\rm tors}\right)$.

We have already shown in Lemma \ref{lem:largeTam} that if we take $K=\bQ$, $G=D_{2p}$ and $\Theta$ to be the relation
from Example \ref{ex:impEx} then we can choose $E$ and $F$ such that the Tamagawa-quotient gets arbitrarily large and such that $F$ contains a predetermined quadratic subfield (subject to the restriction in the Lemma). Also, if we take $p>7$ and $E/\bQ$ semi-stable, as in Lemma \ref{lem:largeTam} then the $p$-part of
\[\frac{|B(K)_{\rm tors}|}{|A(K)_{\rm tors}|}\frac{|B^t(K)_{\rm tors}|}{|A^t(K)_{\rm tors}|}\]
is trivial by \cite{Maz-1} and \cite{Maz-2}. Finally, the real and complex periods cancel as in Theorem \ref{thm:compThm} since they are equal to the corresponding periods of the elliptic curve as explained in \cite{Mil-1}. Equation (\ref{eq:SelmerEqn}) implies that then at least one of the three quantities $\frac{\#\sha_0(A/K)[p^\infty]}{\#\sha_0(B/K)[p^\infty]}$ (and in particular $|\text{ker}(\phi:\sha_0(A/K)[p^\infty]\rightarrow \sha_0(B/K)[p^\infty])|$), $|\text{ker}(\phi:\sha(A/K)_\text{div}\rightarrow \sha(B/K)_\text{div})[p^\infty]|$ or $r(E/F)$ must get large. But for an isogeny like $\phi$ above of degree $d^2$ there exists an isogeny in the opposite direction such that their composition is multiplication by $d^2$ and thus induces the multiplication-by-$d^2$ map on the Tate-Shafarevich group. Thus $\phi$ can kill at most $p^{2\text{ord}_p(d)}$ elements of the Tate-Shafarevich group for each cyclic component and for each copy of $\bQ_p/\bZ_p$ in $\sha(A/K)_\text{div}$. It follows immediately that either the rank or the number of $p$-primary cyclic components in $\sha$ or the number of copies of $\bQ_p/\bZ_p$ in $\sha$, and in either case the $p$-Selmer group gets arbitrarily large when the Tamagawa quotient does. We therefore deduce
\begin{thm}
Given a prime number $p>7$, any non-negative integer $n$ and a quadratic field $M$ (if $p\equiv 1\text{ mod }4$ then assume $M\neq \bQ(\sqrt{p})$), there exists a semi-stable elliptic curve $E/\bQ$ and infinitely many cyclic extensions $F/M$ of degree $p$ which are Galois over $\bQ$ such that $\pSelmer(E/F)\geq p^n$.
\end{thm}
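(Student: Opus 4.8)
The plan is to assemble the ingredients already established, much as sketched in the paragraph preceding the statement. Since the assertion is known for $p\le 7$ (see the introduction), assume $p>7$. First I would invoke Lemma~\ref{lem:largeTam}: given $n$, pick a (suitably large) integer $m$ and obtain a semi-stable elliptic curve $E/\bQ$ and a Galois extension $F/\bQ$ with group $D_{2p}$ containing $M$ such that $C(E/\Theta)=p^{-m}$, where $\Theta=1-2C_2-C_p+2G$ is the $D_{2p}$-relation of Example~\ref{ex:impEx}. For the clause ``infinitely many $F$'', I would keep $E$ fixed and note that any Galois extension $F'/\bQ$ with group $D_{2p}$ containing $M$, in which exactly the same primes of split multiplicative reduction of $E$ have ramification index divisible by $p$, still satisfies $C(E/\Theta)=p^{-m}$; such $F'$ arise in abundance by enlarging the modulus in the class-field-theory construction of Theorem~\ref{thm:dihedralExt} by arbitrarily many further primes of $\frakS_2$ of good reduction for $E$, which by Lemma~\ref{lem:unramExt} contribute a factor $1$ to the Tamagawa quotient.

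Next, writing $G=D_{2p}$, $\Theta=\sum_i H_i-\sum_j H_j'$, $L_i=F^{H_i}$, $L_j'=F^{H_j'}$, I would fix a $G$-injection $\bigoplus_i\bZ[G/H_i]\hookrightarrow\bigoplus_j\bZ[G/H_j']$ with finite cokernel of order $d$ and form the isogeny $\phi\colon A=\prod_i W_{L_i/\bQ}(E)\to B=\prod_j W_{L_j'/\bQ}(E)$ of degree $d^2$ as in \cite[\S 2]{Mil-1} and \cite[\S 4]{TVD-1}; crucially $d$ depends only on $G$ and $\Theta$. Substituting $(A,B,\phi)$ into equation~(\ref{eq:SelmerEqn}), rearranged (as in the paragraph before the statement) so that its right-hand side involves $\C_\Theta(E(F)/E(F)_{\rm tors})$ and the kernels of $\phi$ and $\phi^t$ on the divisible parts of $\sha$, the period quotient equals $1$ by \cite{Mil-1}, the torsion quotient has trivial $p$-part by \cite{Maz-1} and \cite{Maz-2} ($E$ semi-stable, $p>7$), and $C(A/\bQ)/C(B/\bQ)=C(E/\Theta)=p^{-m}$. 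Comparing $p$-adic valuations yields
\begin{multline*}
m\ \le\ \ord_p\frac{\#\sha_0(A/\bQ)[p^\infty]}{\#\sha_0(B/\bQ)[p^\infty]}\\
+\ \ord_p\bigl|\ker(\phi\colon\sha(A/\bQ)_{\rm div}\to\sha(B/\bQ)_{\rm div})[p^\infty]\bigr|\ +\ \bigl|\ord_p\C_\Theta(E(F)/E(F)_{\rm tors})\bigr|.
\end{multline*}

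To finish I would bound the right-hand side by a function of $D:=\dim_{\bF_p}\pSelmer(E/F)$. Because $E$ is semi-stable and $p>7$, \cite{Ser-1} shows the mod-$p$ Galois image is all of ${\rm GL}_2(\bF_p)$, so $E(F)[p]=0$; hence $E(F)/pE(F)\cong\bF_p^{\,r(E/F)}$ (so $r(E/F)\le D$) and, by inflation--restriction, $\pSelmer(E/K')\hookrightarrow\pSelmer(E/F)$ for every intermediate $\bQ\subseteq K'\subseteq F$. The first summand is at most $\ord_p$ of the kernel of $\phi$ on $\sha_0(A/\bQ)[p^\infty]$; since $\phi$ and $\phi^t$ admit complementary isogenies with composite $[d^2]$, the first two summands are each at most $2(\ord_p d)$ times the number of $p$-primary cyclic components (respectively of $\bQ_p/\bZ_p$-factors) of $\sha(A/\bQ)$, which by $\sha(A/\bQ)\cong\sha(E/F)\oplus\sha(E/\bQ)^{\oplus2}$, by $\dim_{\bF_p}\sha(E/F)[p]\le D$, and by $\pSelmer(E/\bQ)\hookrightarrow\pSelmer(E/F)$ is at most $3D$. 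The last summand is at most $B(r(E/F))\le B(D)$, where $B(\rho)$ is the finite maximum of $|\ord_p\C_\Theta(\Gamma)|$ over $\bZ[G]$-lattices $\Gamma$ of rank $\le\rho$ (finiteness by the Jordan--Zassenhaus theorem, as in Proposition~\ref{prop:largeRank}). Hence $m\le 12(\ord_p d)D+B(D)$, an increasing function of $D$ depending only on $p$; so $\#\pSelmer(E/F)=p^{D}\ge p^n$ once $m$ was chosen large enough in terms of $n$ --- uniformly over all the $F$ produced in the first paragraph.

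The first two steps are routine bookkeeping on top of Lemmas~\ref{lem:largeTam}, \ref{lem:unramExt} and Theorem~\ref{thm:dihedralExt}; the real obstacle is the last step, and within it two points: (i) ruling out that the growth of $m$ in~(\ref{eq:SelmerEqn}) is hidden in ever-larger exponents of a bounded number of cyclic $\sha$-components --- which is precisely what the complementary-isogeny bound $p^{2\ord_p d}$ \emph{per component} prevents; and (ii) converting contributions coming from the rank or from $\sha$ over the proper subfield $\bQ$ rather than over $F$ into a lower bound for $\pSelmer(E/F)$, which is where $E(F)[p]=0$, the resulting injectivity of restriction on $p$-Selmer groups, and Jordan--Zassenhaus for the regulator-constant term all enter.
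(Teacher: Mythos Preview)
Your proposal is correct and follows essentially the same route as the paper: both feed Lemma~\ref{lem:largeTam} into the rearranged isogeny identity~(\ref{eq:SelmerEqn}), cancel torsion and periods via \cite{Mil-1}, \cite{Maz-1}, \cite{Maz-2}, and then use the complementary-isogeny bound of $p^{2\ord_p d}$ per cyclic (resp.\ $\bQ_p/\bZ_p$-) component of $\sha$ to force either the rank or the number of $p$-components of $\sha$---and hence the $p$-Selmer group---to grow. One small slip: the extra ramified primes of good reduction that you throw in for the ``infinitely many $F$'' clause contribute trivially to $C(E/\Theta)$ because $c_v=1$ at primes of good reduction, not by Lemma~\ref{lem:unramExt} (which needs the prime to be \emph{unramified}); otherwise you are simply more explicit than the paper about the quantitative bookkeeping and about producing infinitely many $F$, which the paper leaves implicit.
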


\addcontentsline{toc}{section}{References}
\bibliographystyle{plain}
\bibliography{SelmerGroups}

\textsc{St. John's College, Cambridge CB2 1TP, United Kingdom}

E-mail address: \texttt{a.bartel@dpmms.cam.ac.uk}
\end{document}